% EJC papers *must* begin with the following two lines. 
\documentclass[12pt]{article}
\usepackage{e-jc}

% Please remove all other commands that change parameters such as
% margins or pagesizes.

% only use standard LaTeX packages
% only include packages that you actually need

% we recommend these ams packages
\usepackage{amsthm,amsmath,amssymb}

% we recommend the graphicx package for importing figures
\usepackage{graphicx}

% use this command to create hyperlinks (optional and recommended)
\usepackage[colorlinks=true,citecolor=black,linkcolor=black,urlcolor=blue]{hyperref}

% use these commands for typesetting doi and arXiv references in the bibliography

\newcommand{\arxiv}[1]{\href{http://arxiv.org/abs/#1}{\texttt{arXiv:#1}}}

% all overfull boxes must be fixed; 
% i.e. there must be no text protruding into the margins

\usepackage{enumitem}

% declare theorem-like environments
\theoremstyle{plain}
\newtheorem{theorem}{Theorem}
\newtheorem{lemma}[theorem]{Lemma}
\newtheorem{corollary}[theorem]{Corollary}

\theoremstyle{definition}

\newtheorem{notation}[theorem]{Notation}
\newtheorem{convention}[theorem]{Convention}

\theoremstyle{remark}

\usepackage{mathtools}
\usepackage{blkarray}
\usepackage[table]{xcolor}

%\makeatletter

%\renewcommand\section{\@startsection {section}{1}{\z@}
%{-30pt \@plus -1ex \@minus -.2ex}
%{2.3ex \@plus.2ex}
%{\normalfont\normalsize\bfseries}}
%
%\renewcommand\subsection{\@startsection{subsection}{2}{\z@}
%{-3.25ex\@plus -1ex \@minus -.2ex}
%{1.5ex \@plus .2ex}
%{\normalfont\normalsize\bfseries}}
%
%\renewcommand{\@seccntformat}[1]{\csname the#1\endcsname. }

\def\zo {$0\text{-}1$\ }

\def\N{\mathbb N}

\def\stirling#1#2{\genfrac{\{}{\}}{0pt}{}{#1}{#2}}

%~~~~~~~~~~~~~~~~~~~~~~~~~~~~~~~~~~~~~~~~~~~~~~~~~~~~~~~~~~~~~~~~~~~~~~~~~~~~

\title{\bf Bijective enumerations of\\$\bold\Gamma$-free 0-1 matrices}

% input author, affilliation, address and support information as follows;
% the address should include the country, and does not have to include
% the street address 

\author{Be\'ata B\'enyi\\
\small Faculty of Water Sciences, National University of Public Service, Hungary\\
\small\tt beata.benyi@gmail.com\\
\and
G\'abor V.~Nagy\\
\small Bolyai Institute, University of Szeged, Hungary\\
\small\tt ngaba@math.u-szeged.hu}

% \date{\dateline{submission date}{acceptance date}\\
% \small Mathematics Subject Classifications: comma separated list of
% MSC codes available from http://www.ams.org/mathscinet/freeTools.html}

\date{\today\\%\dateline{Jan 1, 9999}{Mar 25, 2016}\\
\small Mathematics Subject Classification: 05A05, 05A19}

\begin{document}

\maketitle

% E-JC papers must include an abstract. The abstract should consist of a
% succinct statement of background followed by a listing of the
% principal new results that are to be found in the paper. The abstract
% should be informative, clear, and as complete as possible. Phrases
% like "we investigate..." or "we study..." should be kept to a minimum
% in favor of "we prove that..."  or "we show that...".  Do not
% include equation numbers, unexpanded citations (such as "[23]"), or
% any other references to things in the paper that are not defined in
% the abstract. The abstract will be distributed without the rest of the
% paper so it must be entirely self-contained.

\begin{abstract}
We construct a new bijection between the set of $n\times k$ $0$-$1$ matrices with no three $1$'s forming a $\Gamma$ configuration and the set of $(n,k)$-Callan sequences, a simple structure counted by poly-Bernoulli numbers.
We give two applications of this result: We derive the generating function of $\Gamma$-free matrices,
and we give a new bijective proof for an elegant result of Aval et al.\ that states that the number of complete non-ambiguous forests with $n$ leaves
is equal to the number of pairs of permutations of $\{1,\dots,n\}$ with no common rise.
\end{abstract}

\section{Introduction}\label{sec1}

We call a $0$-$1$ matrix \emph{$\Gamma$-free} if it does not contain $1$'s in positions such that they form a $\Gamma$ configuration; i.e.\ two $1$'s in the same row and a third $1$ below the left of these in the same column.
$\Gamma=\begin{smallmatrix}1 &1\\1& *\end{smallmatrix}$. For instance, matrix $A$ is not $\Gamma$-free because the bold $1$'s form a $\Gamma$ configuration, while matrix $B$ is a $\Gamma$-free matrix.
\begin{align*}
A=\begin{pmatrix}
0 &{\bf 1}&{\bf 1}&0\\1 & 1& 0& 0\\0&{\bf 1}&0&1
\end{pmatrix}, \quad
B=\begin{pmatrix}
0 & 1 & 0 & 0\\0 & 1& 1 & 0\\ 1& 0& 1& 1
\end{pmatrix}
\end{align*}
Clearly, we can say that a matrix is $\Gamma$-free if and only if it does not contain any of the submatrices from the following set:
\begin{align*}
\left\{\begin{pmatrix}1&1\\1&0\end{pmatrix},
\begin{pmatrix}1&1\\1&1\end{pmatrix}
\right\}.
\end{align*}
Pattern avoidance is an important notion in combinatorics. Matrices were also investigated from different point of view in this context; both extremal \cite{FH} and enumerative \cite{JuSeo},\cite{Kitaev} results are known.\par
$\Gamma$-free $0$-$1$ matrices of size $n\times k$ contain at most $n+k-1$ $1$'s \cite{FH}.
The set of $n\times k$ $0$-$1$ $\Gamma$-free matrices is one of the matrix classes that are enumerated by the poly-Bernoulli numbers, $B_n^{(-k)}$ \cite{BH1}.
Besides matrix classes that are characterized by excluded submatrices there are several other combinatorial objects that are enumerated by the poly-Bernoulli numbers.
For instance, permutations with a given exceedance set, permutations with a constraint on the distance of their values and images, Callan permutations, acyclic orientations of complete bipartite graphs, non-ambiguous forests, etc.
For further details, including recurrence relations and the original definition of poly-Bernoulli numbers via generating function, see \cite{BH1}, \cite{BH2} and \cite{Brewbaker}.
There is also a nice combinatorial formula of the poly-Bernoulli numbers of negative $k$ indices: For $k>0$,
\begin{equation}\label{formula}
B_{n}^{(-k)} =  \sum_{m=0}^{\min(n,k)}m!\stirling{n+1}{m+1}m!\stirling{k+1}{m+1},
\end{equation}
where $\stirling{n}{m}$ denotes a Stirling number of the second kind.
Table~\ref{tablazat} shows the values of $B_n^{(-k)}$ for small $k$ and $n$:
\begin{table}[h]\label{tablazat}%
\begin{center}%
\begin{tabular}{|c||c|c|c|c|c|c|}%
\hline
$n$, $k$
  & 0 & 1 & 2 & 3 & 4 & 5\\
\hline\hline
0 & 1 & 1& 1& 1 & 1 & 1\\
\hline
1 & 1 & 2 & 4 & 8 & 16 & 32\\
\hline
2 & 1 & 4 & 14 & 46 & 146 & 454 \\
\hline
3 & 1 & 8 & 46 & 230 & 1066 & 4718 \\
\hline
4 & 1 & 16 & 146 & 1066 & 6906 & 41506\\
\hline
5 & 1 & 32 & 454 & 4718 & 41506 & 329462\\
\hline%
\end{tabular}%
\caption {The poly-Bernoulli numbers $B_n^{(-k)}$.}%
\end{center}%
\end{table}\par
From \eqref{formula}, we give an obvious combinatorial interpretation of the numbers $B_n^{(-k)}$, which will be regarded as their combinatorial definition in this paper.
(This interpretation is essentially the same as the one that counts Callan permutations.)
On an \emph{$(n,k)$-Callan sequence} we mean a sequence $(S_1,T_1),\dots,(S_m,T_m)$ for some $m\in\N_0$ such that $S_1,\dots,S_m$ are pairwise disjoint nonempty subsets of $\{1,\dots,n\}$,
and $T_1,\dots,T_m$ are pairwise disjoint nonempty subsets of $\{1,\dots,k\}$. We note that the empty sequence is also a Callan sequence with $m=0$.
\begin{lemma}\label{polyB}
For $k>0$, $B_n^{(-k)}$ counts the number of $(n,k)$-Callan sequences.
\end{lemma}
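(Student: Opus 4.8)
The plan is to group the $(n,k)$-Callan sequences according to their length $m$ and to show that the number of those of length $m$ is exactly the $m$-th summand $m!\stirling{n+1}{m+1}\,m!\stirling{k+1}{m+1}$ appearing in \eqref{formula}. Summing over $m$ then finishes the proof: note that $\stirling{n+1}{m+1}=0$ as soon as $m>n$ and $\stirling{k+1}{m+1}=0$ as soon as $m>k$, so restricting the sum to $0\le m\le\min(n,k)$ is automatic, and the $m=0$ term accounts for the empty Callan sequence.

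The key point to isolate is the following bijective identity: for any integers $N\ge 0$ and $m\ge 0$, the number of ordered $m$-tuples $(S_1,\dots,S_m)$ of pairwise disjoint nonempty subsets of $\{1,\dots,N\}$ equals $m!\stirling{N+1}{m+1}$. I would prove this directly. Given such a tuple, set $S_0=\{1,\dots,N\}\setminus(S_1\cup\dots\cup S_m)$ (the ``unused'' elements), and send the tuple to the set partition of $\{1,\dots,N+1\}$ whose blocks are $S_1,\dots,S_m$ together with $S_0\cup\{N+1\}$; this last block is nonempty since it contains $N+1$, so we obtain a partition into exactly $m+1$ blocks. Recording in addition the order in which $S_1,\dots,S_m$ were listed, we get a partition of $\{1,\dots,N+1\}$ into $m+1$ blocks equipped with a linear order on the $m$ blocks not containing $N+1$; there are $\stirling{N+1}{m+1}\cdot m!$ of these. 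The construction is reversible: from such an ordered partition, locate the block containing $N+1$, delete $N+1$ from it, discard the resulting (possibly empty) block, and read the remaining $m$ blocks in the prescribed order as $(S_1,\dots,S_m)$. Hence the count is $m!\stirling{N+1}{m+1}$, as claimed.

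Applying this identity with $N=n$ shows there are $m!\stirling{n+1}{m+1}$ ordered $m$-tuples of pairwise disjoint nonempty subsets of $\{1,\dots,n\}$, and with $N=k$ that there are $m!\stirling{k+1}{m+1}$ such tuples inside $\{1,\dots,k\}$. By definition an $(n,k)$-Callan sequence of length $m$ is exactly a choice of one tuple of the first kind, namely $(S_1,\dots,S_m)$, together with one tuple of the second kind, namely $(T_1,\dots,T_m)$, paired coordinatewise; these choices are independent, so there are precisely $m!\stirling{n+1}{m+1}\cdot m!\stirling{k+1}{m+1}$ of them. Summing over all $m\ge 0$ and comparing with \eqref{formula} gives that the total number of $(n,k)$-Callan sequences is $B_n^{(-k)}$.

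I expect the only substantive step to be the bijective identity in the second paragraph; once it is established, the rest is bookkeeping (a product rule for each fixed $m$, then a sum over $m$). The one detail worth stating carefully is why the ``extra'' block $S_0\cup\{N+1\}$ is legitimately nonempty and why deleting $N+1$ can genuinely leave it empty on the way back, which is what makes the correspondence with partitions of $\{1,\dots,N+1\}$ — rather than of $\{1,\dots,N\}$ — exact.
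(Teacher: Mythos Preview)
Your proof is correct and follows essentially the same approach as the paper: both group Callan sequences by length $m$, count the $S$-tuples and $T$-tuples separately via the ``dummy element'' bijection with ordered partitions of $\{1,\dots,N+1\}$ into $m+1$ blocks, and conclude by the product rule and summation over $m$. Your write-up is simply more explicit about the inverse map and the boundary cases.
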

\begin{proof}
For a fixed length $m$, there are $m!\stirling{n+1}{m+1}$ ways to give the sequence $S_1,\dots,S_m$. This is because if we partition $\{1,\dots,n+1\}$ into $m+1$ (nonempty) classes,
and order the $m$ classes not containing the element $n+1$ arbitrarily, then we obtain every possible $S_1,\dots, S_m$ sequences uniquely in this way.
($n+1$ can be thought as a ``dummy'' element, which is introduced to identify the class containing the elements of $\{1,\dots,n\}\setminus\cup_{i=1}^m S_i$ and to allow this set to be empty.)
Analogously, there are $m!\stirling{k+1}{m+1}$ ways to give the sequence $T_1,\dots,T_m$.
The sequences $(S_i)_{i=1}^m$ and $(T_i)_{i=1}^m$ are independent from each other and, trivially, $m$ can be at most $\min(n,k)$, so the statement follows from formula~\eqref{formula}.
\end{proof}\par
Now we can state our first main theorem.
\begin{theorem}\label{tetel1}
There exists a bijection between the set of $\Gamma$-free $n\times k$ \zo matrices
and the set of $(n,k)$-Callan sequences. Thus the number of $\Gamma$-free $n\times k$ \zo matrices is $B_n^{(-k)}$.
\end{theorem}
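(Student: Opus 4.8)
The plan is to construct an explicit bijection $\Phi$ sending a $\Gamma$-free $n\times k$ matrix to an $(n,k)$-Callan sequence, to build its inverse $\Psi$, and to check that $\Psi\circ\Phi$ and $\Phi\circ\Psi$ are the identity; the equality with $B_n^{(-k)}$ is then immediate from Lemma~\ref{polyB}. Before defining the maps I would record a structural description of the class to lean on throughout: a $0$-$1$ matrix is $\Gamma$-free precisely when every $1$-entry is either the rightmost $1$ in its row or the lowest $1$ in its column. (Indeed a $\Gamma$ is exactly a $1$-entry that has another $1$ to its right in the same row and another $1$ below it in the same column; so being $\Gamma$-free means no $1$-entry is simultaneously non-rightmost and non-lowest.) Thus the $1$'s split into the ``rightmost-in-row'' entries, one per nonzero row, and the ``lowest-in-column'' entries, one per nonzero column, and these two families together cover all of the $1$'s; this is the combinatorial skeleton that the Callan sequence will encode.

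I would define $\Phi$ by iteratively peeling off a canonical layer. If $M=0$, return the empty sequence. Otherwise single out a set of rows $S\subseteq[n]$ and a set of columns $T\subseteq[k]$ — the ``outermost block'', read off from the data above (the rightmost-$1$ columns, and the entries that are extremal both in their row and in their column) — delete exactly those rows and columns, verify that the resulting submatrix $M'$ is again $\Gamma$-free and still has no zero row or column, recurse on $M'$, and prepend the pair $(S,T)$ to the sequence so obtained; disjointness of the $S_i$'s (and of the $T_i$'s) and their nonemptiness are then automatic. For the inverse $\Psi$, process $(S_1,T_1),\dots,(S_m,T_m)$ block by block: start from the empty matrix and adjoin the blocks in order, each time inserting the rows $S_i$ and columns $T_i$ and filling in the $1$'s inside the new block together with the $1$'s joining the newly inserted rows/columns to those already placed, according to the rule that reverses one peeling step of $\Phi$.

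The verification has three parts, each an induction (on $m$, or on the number of nonzero rows). First, $\Phi$ is well defined: the layer $(S,T)$ is uniquely determined, is nonempty as soon as $M\neq 0$, and its removal lands back in the class with all surviving rows and columns nonzero, so that the recursion terminates and outputs a bona fide Callan sequence. Second, $\Psi$ is well defined: adjoining a block with the prescribed fill-in to a $\Gamma$-free matrix yields a $\Gamma$-free matrix with the expected row- and column-support. Third, $\Psi\circ\Phi=\mathrm{id}$ and $\Phi\circ\Psi=\mathrm{id}$, by matching the two recursions step by step. The main obstacle is the first point — pinning down the right notion of ``layer'': one needs, inside an arbitrary full-support $\Gamma$-free matrix, a canonically defined pair $(S,T)$ whose deletion preserves both $\Gamma$-freeness and full support, and whose attachment to the rest of the matrix is forced, so that recording only the two \emph{sets} $S$ and $T$ loses no information. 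Equivalently, on the $\Psi$ side the technical heart is showing that the fill-in rule never recreates a forbidden $\Gamma$, i.e.\ that it remains consistent with the structural characterization at every stage; once the layer is correctly isolated, the rest is routine bookkeeping.
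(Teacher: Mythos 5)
Your opening observation is correct and useful: a matrix is $\Gamma$-free exactly when every $1$ is the rightmost $1$ of its row or the lowest $1$ of its column. But from there the proposal is a plan rather than a proof, and the piece you defer -- ``pinning down the right notion of layer'' -- is precisely the mathematical content of the theorem. You never say what the outermost pair $(S,T)$ is, why it is canonically determined, why it can consist of the right data (in the paper's construction a single block is \emph{not} one row: it is a top row together with the whole chain of ``special'' rows hanging below it, linked by the longest edge out of each such row, while $T$ is the set of columns of the top-$1$'s of that top row), why deleting it leaves a matrix whose sequence is exactly the tail, and -- most delicately -- why the attachment of the peeled layer to the remainder is forced by the two sets alone. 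That last point is genuinely nontrivial: it rests on the facts that in a $\Gamma$-free matrix only the rightmost $1$ of a row can receive an edge from below, that consequently the blocks carry an \emph{increasing forest} structure (the paper's $F_M$), and that the linear order of the Callan sequence encodes this forest via the increasing-forest/permutation bijection (Lemma~\ref{monoton_erdo}); one then checks that the children of the peeled root are recoverable (e.g.\ as the components of the remainder whose lowest row lies above $\min S$), and that the reinserted $1$'s go into the column of the rightmost $1$ of each child's lowest row without creating a $\Gamma$. Calling all of this ``routine bookkeeping'' inverts where the difficulty sits; as written, the map $\Phi$ is not defined, so injectivity, surjectivity and even well-definedness cannot be checked.

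Two smaller inaccuracies: after deleting a layer the submatrix need \emph{not} have full support (the original matrix may have all-$0$ rows and columns, and a Callan sequence need not cover $\{1,\dots,n\}$ or $\{1,\dots,k\}$), so the invariant you propose to maintain is wrong as stated and should instead be that the unused rows/columns are exactly the all-$0$ ones (the paper's property~(i)); and your inverse $\Psi$, which adjoins blocks in sequence order with each block ``outer'' relative to what is built, must in effect reconstruct the forest from the permutation -- the paper does this by processing blocks in an order where children precede parents, and proving the resulting matrix is $\Gamma$-free and maps back to the given sequence takes a careful induction (the paper's observation $(*)$ that each column is filled from top to bottom), not a line of bookkeeping. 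So the approach could in principle be completed -- a recursive peeling formulation is equivalent to the paper's bijection $\phi$ -- but the proposal as it stands has a genuine gap at its core.
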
\par
As mentioned earlier, the $\Gamma$-free $0$-$1$ matrices have already been enumerated in \cite{BH1}. In that paper, B\'enyi and Hajnal take an obvious combinatorial interpretation of poly-Bernoulli numbers (which is basically the same as ours, Lemma~\ref{polyB})
and give a bijective proof for the second statement of Theorem~\ref{tetel1}. Their proof involved a lot of technical details and it was still desirable to find a simple bijective explanation for this result;
a direct simple bijection that exhibits the connection between $\Gamma$-free matrices and Callan sequences.
In Section~\ref{sec2} we define such a bijection, which is essentially different from the one in \cite{BH1}.
It reveals the inner structure of $\Gamma$-free matrices from a new point of view. As an application, we derive the generating function of $\Gamma$-free matrices in Section~\ref{sec2.5}.
Furthermore, we use our bijection in Section~\ref{sec3} to prove bijectively that the set of complete non-ambiguous forests and the set of permutation pairs with no common rise are equinumeruous.\par
$\Gamma$-free matrices are very closely related to non-ambiguous trees and forests that were introduced in \cite{Aval1}.
A \emph{non-ambiguous tree} of size $n$ is a set $A$ of $n$ points: $v=(x(v),y(v))\in \N^+\times \N^+$ satisfying the following conditions:
\begin{enumerate}
\item $(1,1)\in A$ is the root of the tree;
\item for a given non-root point $p\in A$, there exists one point $q\in A$ such that $y(q)<y(p)$ and $x(q)=x(p)$ or one point $s$ such that $x(s)<x(p)$ and $y(s)=y(p)$ but not both;
\item there is no empty line between two given points: if there exists a point $p\in A$ such that $x(p)=x$ (resp.\ $y(p)=y$), then for every $x'<x$ (resp.\ $y'<y$) there exists a point $q\in A$ such that $x(q)=x'$ (resp.\ $y(q)=y'$).
\end{enumerate}
This structure can be viewed as a rooted binary tree graph on vertex set $A$ with root $(1,1)$ in which the parent of a non-root vertex $p$ is the nearest $q$ or $s$ from condition~2.
The name ``non-ambiguous'' comes from the property that the parent of $p$ can be uniquely recovered from the vertex set $A$, since either $q$ or $s$ does not exist.
In order to be consistent with the notion of $\Gamma$-free matrices, we slightly modified the original definition of non-ambiguous trees (we translated $A$), and we follow an unusual convention in this paper:
\begin{convention}\label{konv}
Throughout this paper, the rows of a matrix
are always indexed {\it from bottom to top\/} and the columns are indexed {\it from right to left\/}.
\end{convention}
The \emph{characteristic matrix} of a finite set $A\subseteq \N^+\times \N^+$ is a $0$-$1$ matrix $\chi_A$ with $\max_{v\in A} x(v)$ rows and $\max_{v\in A} y(v)$ columns, such that, \emph{following Convention~\ref{konv}},
there is a $1$ in position $(i,j)$ of $\chi_A$, if and only if $(i,j)\in A$.
Clearly, the characteristic matrix of a non-ambiguous tree is $\Gamma$-free by condition~2, so non-ambiguous trees can be thought as special $\Gamma$-free $0$-$1$ matrices.
It turns out that there is a graph theoretic terminology for $\Gamma$-free $0$-$1$ matrices in literature.
A \emph{non-ambiguous forest} is a finite set $A\subseteq \N^+\times \N^+$ such that $\chi_A$ is a $\Gamma$-free matrix without all-$0$ rows and columns.
This is actually the original definition given in \cite{Aval1}, but alternatively we could say that $A$ is a non-ambiguous forest, if it satisfies condition~3 in the definition of non-ambiguous trees and
the modified condition~2 that allows the possibility that neither such a $q$ nor such an $s$ exists.
Analogously to trees, a non-ambiguous forest has an underlying (rooted) binary forest structure, see Figure~\ref{fig0}. We note that non-ambiguous trees are exactly those non-ambiguous forests that have one component.
We say that a non-ambiguous tree or forest is \emph{complete} if its vertices have either $0$ or $2$ children.\par
\begin{figure}[h]%
\includegraphics[scale=0.42]{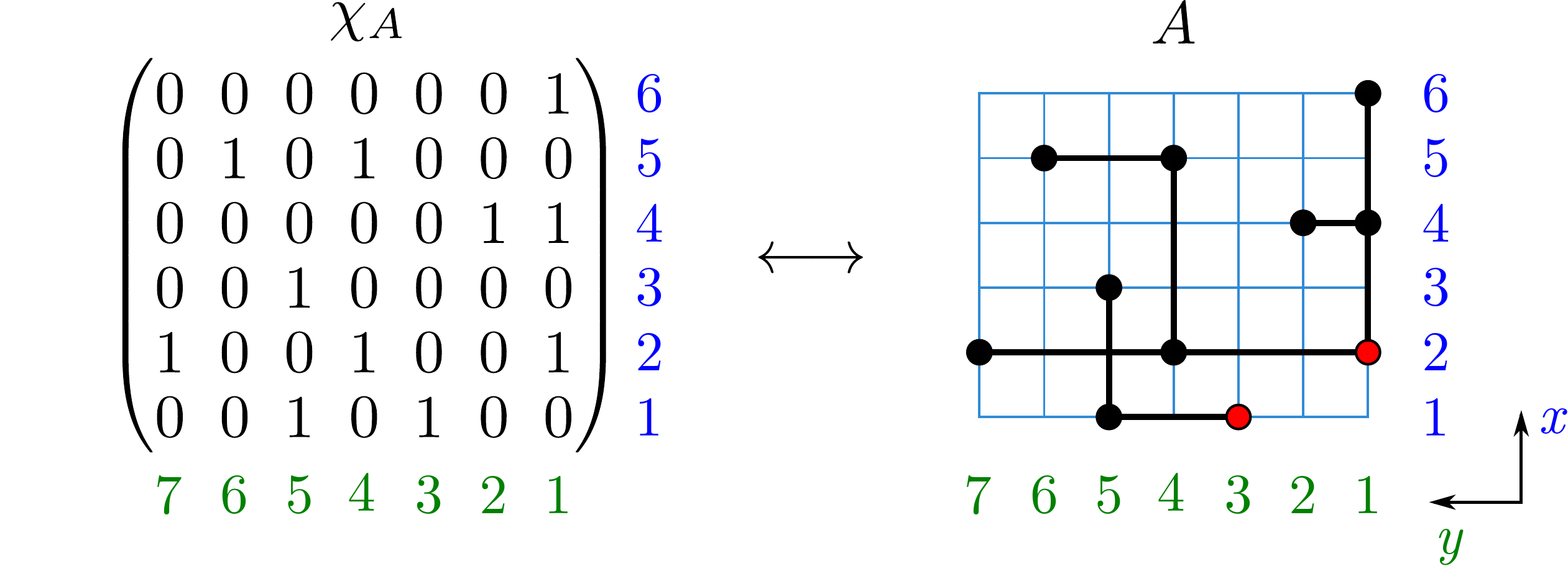}%
\centering
\caption{A $\Gamma$-free $0$-$1$ matrix and the corresponding non-ambiguous forest.}\label{fig0}%
\end{figure}%
In \cite{Aval2} Aval et al.\ presented a bijection between non-ambiguous trees and the corresponding subclass of Callan permutations using associated ordered trees as intermediate combinatorial objects.
As an easy corollary of the proof of Theorem~\ref{tetel1}, we deduce the number of non-ambiguous forests with characteristic matrix of size $n\times k$ in Section~\ref{sec2}.
A special class of non-ambiguous trees has interesting connection to the $J$-Bessel function.
Let denote $b_n$ the number of complete non-ambiguous trees with $n$ internal vertices (in OEIS \cite{OEIS} as sequence A002190). Then \cite{Aval1}:
\[\sum_{n\geq 0}b_n\frac{x^{n+1}}{(n+1)!^2}=-\ln J_0(2\sqrt{x}),\]
where $J_0$ is the Bessel function.
Aisbett \cite{Aisbett} and Jin \cite{Jin} show strong connections of complete non-ambiguous trees with a poset of vector partitions.\par
Let $\alpha=(a_1,\dots,a_n)$ and $\beta=(b_1,\dots,b_n)$ be permutations of $\{1,\dots,n\}$. We say the pair $(\alpha,\beta)$ has {\it common rise\/} at position $i$ (where $1\le i\le n-1$),
if $a_i<a_{i+1}$ and $b_i<b_{i+1}$ hold at the same time.
In \cite{Aval1} Aval et al.\ made a nice observation. As a corollary, they noted that the number of complete non-ambiguous forests with $n$ leaves
has the same generating function as the number of pairs of permutations of $\{1,\dots,n\}$ with no common rise \cite{Carlitz2}, denoting these quantities by $\tau(n)$ and $\omega(n)$, respectively, 
\[\sum_{n\geq 0}\tau(n)\frac{x^n}{n!^2}=\left(\sum_{n\geq 0}\frac{(-1)^nx^n}{n!^2}\right)^{-1}=\sum_{n\geq 0}\omega(n)\frac{x^n}{n!^2};\]
and asked for a bijective explanation of the equality $\tau(n)=\omega(n)$.
Jin described a bijection with the intermediate combinatorial objects of certain heaps \cite{Jin}.
We present a more direct bijection in Section~\ref{sec3}, based on a labeled forest structure.
This is our second main result.
\begin{theorem}\label{tetel2}
There exists a bijection between the set of complete non-ambiguous forests with $n$ leaves and the set of pairs of permutations of $\{1,\dots,n\}$ with no common rise.
Thus these sets are equinumeruous.
\end{theorem}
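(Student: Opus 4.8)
The plan is to realise a complete non-ambiguous forest as a \emph{labelled} complete binary forest, to read the two permutations off from the row and column indices of its $1$'s, and then to match the combinatorial conditions. I would begin by unwinding what ``complete'' means on the level of the characteristic matrix. Following Convention~\ref{konv}, the parent of a non-root $1$ is the nearest $1$ to its right in its row or the nearest $1$ below it in its column, exactly one of which exists by $\Gamma$-freeness; dually, a $1$ has a row-child iff it is not the leftmost $1$ of its row and a column-child iff it is not the topmost $1$ of its column. Hence a $1$ is a leaf iff it is simultaneously leftmost in its row and topmost in its column, and it is internal iff it is neither, so completeness forces the set of row-leftmost $1$'s and the set of column-topmost $1$'s to coincide. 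Consequently a complete non-ambiguous forest with $n$ leaves has an $n\times n$ characteristic matrix, its leaves occupy the cells of a permutation matrix --- write $\sigma$ for the corresponding permutation, so the leaf of row $i$ lies in column $\sigma(i)$ --- and every other $1$ (an internal vertex) lies in an ``inversion cell'' $(i,\sigma(i'))$ with $i<i'$ and $\sigma(i)>\sigma(i')$; moreover each row is a single horizontal path and each column a single vertical path of the forest, so each connected component owns equally many rows as columns.

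Next I would build the bijection. Organising the components via the bijection of Theorem~\ref{tetel1}, a complete non-ambiguous forest is equivalent to the permutation $\sigma$ (recording the leaf columns; this will be one of the two output permutations) together with a choice of which inversion cells of $\sigma$ carry internal vertices, subject to the path and completeness constraints. I would encode this second piece of data as a permutation $\alpha$ of $\{1,\dots,n\}$ whose ascending runs correspond to the components of the forest --- one run per component, so $\alpha$ has exactly as many ascents as there are internal vertices --- and whose entries inside each run record the leaves of that component together with the shape of the binary tree on them; together with $\beta:=\sigma$ this produces a pair $(\alpha,\beta)$ of permutations of $\{1,\dots,n\}$. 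The inverse map reconstructs, from such a pair, the underlying labelled forest and then fills in the matrix cell by cell.

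The step I expect to be the main obstacle is proving that the pair $(\alpha,\beta)$ obtained this way is precisely one with no common rise, and that every such pair occurs. In the forward direction the point is that, passing from one leaf to the next in the two readings, one is forced to advance within a single row or within a single column but never genuinely in both orders at once: a simultaneous rise of $\alpha$ and $\beta$ would either create a forbidden $\Gamma$ or leave a row-leftmost (resp.\ column-topmost) $1$ with exactly one child, contradicting completeness. For the converse one must check that the absence of common rises is exactly what makes the cell-by-cell reconstruction well defined, i.e.\ produces a $\Gamma$-free matrix meeting the completeness criterion obtained in the first paragraph. Combining this with the enumeration coming from Theorem~\ref{tetel1} and the Carlitz identity recalled in the introduction then yields $\tau(n)=\omega(n)$, which is the content of Theorem~\ref{tetel2}.
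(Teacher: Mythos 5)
Your plan breaks down at the encoding step, and the failure is quantitative, not just a matter of missing details. You send a complete non-ambiguous forest with leaf permutation $\sigma$ (leaf of row $i$ in column $\sigma(i)$, following Convention~\ref{konv}) to a pair with $\beta:=\sigma$, so your map would have to restrict to a bijection, for each fixed $\sigma$, between forests with that leaf permutation and permutations $\alpha$ such that $(\alpha,\sigma)$ has no common rise. These fibers do not have the same size. Take $n=3$ and $\sigma=\eta=(3,1,2)$, so the leaves sit at $(1,3),(2,1),(3,2)$. The possible internal cells are $(1,1)$ and $(1,2)$, and one checks directly that \emph{all four} subsets $\emptyset$, $\{(1,1)\}$, $\{(1,2)\}$, $\{(1,1),(1,2)\}$ give $\Gamma$-free matrices in which every vertex has $0$ or $2$ children, i.e.\ there are $4$ complete non-ambiguous forests with this leaf set. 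On the other side, $\beta=(3,1,2)$ ascends only at position $2$, so $(\alpha,\beta)$ has no common rise iff $\alpha_2>\alpha_3$, giving only $3$ choices of $\alpha$. The correct refinement (Theorem~\ref{tetel3}) groups the pairs not by the literal second permutation but by the \emph{point set} of the sequence $(a_i,b_i)_i$, i.e.\ it pairs the forests with leaf set $P_\eta$ with those $(\alpha,\beta)$ satisfying $\beta=\eta\circ\alpha$; with your choice $\beta=\sigma$ no assignment of $\alpha$'s can make the map bijective.

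There is a second, independent gap: you propose that the entries of each ascending run of $\alpha$ ``record the leaves of that component together with the shape of the binary tree on them,'' but a run is just a subset of $\{1,\dots,n\}$ and carries no room for a tree shape; worse, the shape is genuinely extra data. For $\eta=(3,2,1)$ the leaf set $\{(1,3),(2,2),(3,1)\}$ supports two \emph{distinct} complete non-ambiguous trees (internal vertices $\{(1,1),(1,2)\}$ versus $\{(1,1),(2,1)\}$), both single components with the same leaves, so no encoding by ``which leaves lie in which run'' can separate them. This is exactly where the real work of the paper lies and where your sketch has no mechanism: after transporting matrices to increasing forests on $P_\eta$ via the map $M\mapsto F_M$ from Theorem~\ref{tetel1}, completeness becomes the \emph{properly labeled} condition (every child's subtree contains a vertex whose second coordinate is smaller than the parent's), while, via Lemma~\ref{monoton_erdo}, absence of common rises becomes the \emph{leftmost-valid} condition (the leftmost child is smaller in the second coordinate); these are different local conditions, and the paper matches the two forest families by a nontrivial recursive merging bijection $f$. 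Your appeal to ``a simultaneous rise would create a $\Gamma$ or violate completeness'' asserts precisely the equivalence that this construction is needed to prove, so as it stands the proposal does not establish Theorem~\ref{tetel2}.
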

In fact, we prove this theorem in a stronger form; we construct a bijection from the set of complete non-ambiguous forests with a fixed set of leaves to the corresponding subset of pairs of permutations with no common rise;
see Theorem~\ref{tetel3} in Section~\ref{sec3}.
\section{The number of $\bold\Gamma$-free 0-1 matrices}\label{sec2}
In this section we present a proof of Theorem~\ref{tetel1}. But we need to cite a folklore result first.\par
By {\it rooted forest\/} we mean a vertex-disjoint union of (unordered) rooted trees. Two rooted forests with the same vertex set are considered the same,
if and only if they have the same set of roots and they have the same edge set.
In our proofs, the edges will usually be directed from parent to child (so the components become arborescences).
Fix a totally ordered set $(V,<)$. We say that a rooted forest $F$ on vertex set $V$ is {\it increasing\/}, if whenever the vertex $u$ is the parent of vertex $v$ in $F$,
then $u<v$. (See Figure~\ref{fig1} for an example.) We note that in our figures we always list the children of a given parent in decreasing order
from left to right (their order ``does not count''),
the tree components are listed in the decreasing order of their roots, and we follow this order in our algorithms, too.
\begin{figure}[h]%
\includegraphics[scale=1.4]{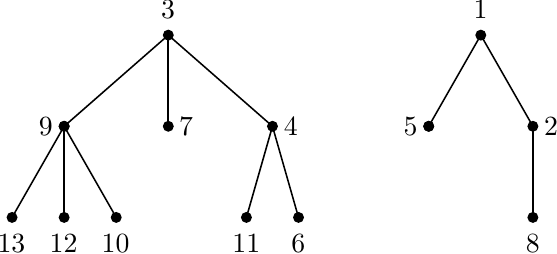}%
\centering
\caption{An increasing forest
on vertex set $\{1,\dots,13\}$.}\label{fig1}%
\end{figure}%
\begin{notation}\label{jeloles}
If $v$ is a vertex of a rooted forest $F$, then $F[v]$ denotes the rooted subtree of $F$ spanned by $v$ and its descendants (children, grandchildren etc.). The root of $F[v]$ is $v$.
\end{notation}
The following lemma is well known \cite{StanEc2}:
\begin{lemma}\label{monoton_erdo}
Let $V$ be a finite, totally ordered set. There exists a bijection $\pi$ from the set of increasing forests on vertex set $V$
to the set of permutations of $V$.
\end{lemma}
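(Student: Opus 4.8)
The plan is to exhibit the bijection $\pi$ explicitly and then verify it has a two-sided inverse. Given an increasing forest $F$ on $V$, let $\pi(F)$ be the word obtained by reading off the vertices of $F$ in the natural left-to-right, depth-first (preorder) order fixed by the drawing conventions above: list the tree components of $F$ in decreasing order of their roots, traverse each component in preorder, and recurse into the children of a vertex in decreasing order of their labels. Since the components, and within them the subtrees, are disjoint and cover $V$, the word $\pi(F)$ is a permutation of $V$. Conversely, given a permutation $\sigma=\sigma_1\sigma_2\cdots\sigma_n$ of $V$, define a rooted forest $\psi(\sigma)$ on $V$ by the rule that the parent of $\sigma_i$ is its \emph{nearest preceding smaller entry}, i.e.\ the $\sigma_j$ with $j<i$ chosen maximal subject to $\sigma_j<\sigma_i$; if no such $j$ exists (in particular for $i=1$) then $\sigma_i$ is declared a root. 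Every non-root vertex then receives a unique, strictly smaller parent, so the parent map is acyclic and $\psi(\sigma)$ is a genuine rooted forest, and it is increasing by construction.

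The heart of the argument is the identity $\psi(\pi(F))=F$. The key structural fact is that in an increasing forest a vertex is smaller than every one of its proper descendants (the root-path to any descendant is strictly increasing); hence in the word $\pi(F)$ each subtree $F[v]$ (Notation~\ref{jeloles}) occupies a contiguous block, all of whose entries are $\ge v$. Now fix a non-root vertex $w$ with parent $p$. In $\pi(F)$, immediately after $p$ we recurse into the children of $p$ from largest label down, so the entries strictly between $p$ and $w$ lie in the blocks $F[c]$ for children $c>w$ of $p$ and are therefore all $>w$; consequently the nearest preceding entry smaller than $w$ is exactly $p$, which is what $\psi$ reconstructs. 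If instead $w$ is a root, lying in the component listed in position $t$, then every entry preceding $w$ belongs to a component whose root exceeds $w$ (decreasing-root ordering) and hence is $>w$, so $\psi$ correctly declares $w$ a root. This shows $\psi\circ\pi=\mathrm{id}$, so $\pi$ is injective.

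To conclude it suffices to know that the number of increasing forests on an $n$-element totally ordered set is exactly $n!$, for then the injection $\pi$ between two finite sets of equal size is a bijection. This count is a one-line induction: the maximum $M$ of $V$ is always a leaf of any increasing forest on $V$ (it can be nobody's parent), deleting it gives an increasing forest on $V\setminus\{M\}$, and conversely any increasing forest on $V\setminus\{M\}$ extends in exactly $|V|$ ways — attach $M$ as a new root, or as a child of one of the other $|V|-1$ vertices — each extension being increasing since $M$ is maximal; so the count on $V$ is $|V|$ times the count on $V\setminus\{M\}$, giving $n!$ from the base case $n=0$. (Alternatively, one checks $\pi\circ\psi=\mathrm{id}$ directly by an argument symmetric to the one above.)

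I expect the main obstacle to be precisely the verification $\psi\circ\pi=\mathrm{id}$: one must argue carefully that the "nearest preceding smaller entry" in the depth-first word is really the parent, and this rests entirely on the monotonicity of root-paths in an increasing forest together with the contiguity of subtrees in a preorder traversal. A minor point worth stating is that $\pi$ is well defined only because the left-to-right ordering of siblings and of components has been fixed; the underlying unordered forest, and the map $\psi$, are insensitive to this choice, so no information is lost.
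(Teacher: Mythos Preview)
Your proof is correct and defines the same bijection $\pi$ as the paper, via pre-order traversal with children and components listed in decreasing order. The paper simply declares the bijectivity ``straightforward'' and defers to Stanley, whereas you supply the explicit inverse (nearest preceding smaller entry) and a careful verification of $\psi\circ\pi=\mathrm{id}$ together with the $n!$ count; so your argument is a fleshed-out version of the same approach rather than a different one.
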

\begin{proof}
We can obviously assume, that $V=\{1,\dots,n\}$ for some $n$, equipped with the natural order.
Let $F$ be an increasing forest on $V$.
We will need the \emph{pre-order transversal} of a rooted tree $T$, denoted by $\alpha(T)$. It is a permutation of the vertices of $T$ which can be defined recursively as follows:
Let $r$ be the root of $T$. If $T$ has only one vertex, $r$, then $\alpha(T):=r$; otherwise let $v_1,\dots,v_m$ be the children of $r$ in decreasing order,
and set $\alpha(T):=r\alpha(T[v_1])\dots\alpha(T[v_m])$.
(The ``product'' means concatenation here.)
Now, if the (rooted tree) components of $F$ are $T_1,\dots,T_k$, listed in the decreasing order of their roots,
then $\pi(F)$ is defined to be the permutation $\alpha(T_1)\alpha(T_2)\dots\alpha(T_k)$.
For example, $\pi$ maps the increasing forest in Figure~\ref{fig1} to the permutation $(3,9,13,12,10,7,4,11,6,1,5,2,8)$, for $n=13$.
It is straightforward to check that $\pi$ is a bijection, as the lemma states. For further details (with a slightly different terminology), see Section~1.5 in \cite{StanEc2}.
\end{proof}
\begin{corollary}{\rm\cite{StanEc2}}
The number of increasing forests on vertex set $\{1,\dots,n\}$ is $n!$.
\end{corollary}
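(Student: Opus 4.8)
The plan is to read this off immediately from Lemma~\ref{monoton_erdo}. That lemma provides a bijection $\pi$ from the set of increasing forests on a finite totally ordered set $V$ to the set of permutations of $V$. Specializing to $V=\{1,\dots,n\}$ with its natural order, a bijection between two finite sets forces them to have the same cardinality, and the number of permutations of an $n$-element set is $n!$ by the usual count. Hence the number of increasing forests on $\{1,\dots,n\}$ equals $n!$, which is exactly the claim.

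As a self-contained alternative that does not invoke the explicit map $\pi$, I would build the forest one vertex at a time in increasing order of labels. Process the vertices $1,2,\dots,n$ in turn; when vertex $i$ is processed, every previously placed vertex carries a strictly smaller label, so the increasing condition allows vertex $i$ either to start a new tree component or to be attached as a child of any one of the $i-1$ vertices already present. Since the forest is unordered, attaching $i$ below a given vertex can be done in exactly one way, so there are $i$ choices at step $i$; these choices are independent and no increasing forest is produced twice, so the total is $\prod_{i=1}^{n} i = n!$.

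There is no genuine obstacle here: the corollary is a one-line consequence of the preceding lemma. In the alternative argument the only points requiring a word of care are that the $i$ options available at step $i$ are pairwise distinct and that every increasing forest arises from exactly one sequence of choices; both are clear once one observes that the parent of each non-root vertex is determined by the forest and that the order in which siblings are inserted is immaterial.
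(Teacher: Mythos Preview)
Your proposal is correct, and your primary argument---reading the count off directly from the bijection of Lemma~\ref{monoton_erdo}---is exactly the paper's approach: the corollary is stated without a separate proof, as an immediate consequence of that lemma. Your alternative incremental-construction argument is also valid and is a standard self-contained proof, but it is not needed here and does not appear in the paper.
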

Now we are ready to prove our first main theorem.
\begin{proof}[\textbf{\textit{Proof of Theorem~\ref{tetel1}.}}]
Let $\mathcal M$ denote the set of $\Gamma$-free $n\times k$ \zo matrices, and let $\mathcal S$ denote the set of $(n,k)$-Callan sequences.
Now we define a function $\phi\colon\mathcal M\to\mathcal S$, which will be proved to be bijective.
The reader is advised to follow the construction on Figure~\ref{fig2}.\par
Let $M$ be an arbitrary matrix from $\mathcal M$. The rows of $M$ can be identified with the numbers $1,\dots,n$,
and the columns can be identified with the numbers $1,\dots,k$, and we index rows and columns as described in Convention~\ref{konv}.\par
We say that an element $1$ in $M$ is a {\it top-$1$\/}, if it is the highest 1 in its column, i.e.\
if there is no 1 above it in its column.
There are three types of rows in $M$: There are the all-$0$ rows; there are the rows that contain at least one top-$1$, we call them {\it top rows\/};
and there are the rows that contain at least one $1$ but none of their $1$'s is top-$1$, we call them {\it special rows\/}.
In order to reduce the amount of required formalism, we define $\phi(M)$ with the help of auxiliary structures. Let $G=G_M$ be the following directed graph:
The vertices of $G$ are the (positions of) $1$'s of $M$. For each non-top $1$ vertex $u$, we add a directed edge starting from $u$
and ending at the next (lowest) $1$ above $u$ in its column; and there are no other edges in $G$.
If the edge $e$ starts from row $s$ and ends in row $t$, we say that the {\it length\/} of $e$ is $t-s$.
Since $M$ is $\Gamma$-free, the edges starting from an arbitrary fixed row have pairwise distinct lengths.
For each special row $s$, the longest edge starting from $s$ is called {\it special\/}. We note that this is a valid definition
and we underline that no special edge starts from a top row. The non-special edges of $G$ are called {\it regular\/}. The next step is to ``project the special edges horizontally'':
Let $H=H_M$ be the directed graph whose vertices are the {\it non-all-$0$ rows\/} (row indices) of $M$, and for each {\it special\/} edge $e$ of $G$,
there is an edge $e'$ in $H$ such that if $e$ starts from row $s$ and ends in row $t$, then $e'$ starts from vertex $s$ and ends at vertex $t$;
and there are no other edges in $H$. $H$ has a very simple structure. Since all edges are directed ``upwards'', there is no directed cycle in $H$.
All vertices corresponding to a special row have outdegree $1$, and all vertices corresponding to a top row have outdegree $0$.
In each row of $M$, only the rightmost $1$ can be the end vertex of an edge in $G$, otherwise a $\Gamma$ would be formed;
and for this unique possible end vertex $v$, there is at most one edge in $G$ which ends at $v$,
the edge that starts from highest $1$ below $v$ in its column (if such a $1$ exist).
This implies that every vertex of $H$ has indegree at most $1$. These altogether mean that $H$
consists of vertex-disjoint directed paths. The end vertices of the path components correspond to a top row, while the other vertices of $H$ correspond to special rows.
Consequently, the number of components of $H$ is equal to the number of top rows in $M$, this number is denoted by $m$.
We assign a pair $(R,C)$ to each component $P$ of $H$ where $R$ is the set of vertices (row indices) of $P$,
and $C$ is the set of column indices of top-$1$'s in the top row corresponding to the endpoint of $P$.\par
After doing this for all components, we get a collection of pairs $(R_1,C_1),\dots,(R_m,C_m)$; we are left to define how to permute them to obtain a sequence.
The obtained sequence will be an $(n,k)$-Callan sequence, because it has the required properties:
The $R_i$'s are obviously pairwise disjoint nonempty subsets of $\{1,\dots,n\}$ by construction;
the $C_i$'s are nonempty subsets of $\{1,\dots,k\}$ because every top row contains at least one top-$1$,
and the $C_i$'s are pairwise disjoint because every column of $M$ contains at most one top-$1$ (all-$0$ columns have no top-$1$'s, the other columns have exactly one top-$1$'s).\par
Now we ``project the regular edges of $G$ horizontally''. To this end, we define a new directed graph $F=F_M$ as follows:
The vertices of $F$ are the pairs $(R_1,C_1),\dots,(R_m,C_m)$; and for each {\it regular\/} edge $e$ of $G$, there is an edge $\widetilde e$ in $F$,
such that if $e$ starts from row $s$ and ends in row $t$, then $\widetilde e$ is from $(R_i,C_i)$ to $(R_j,C_j)$ where $i$ and $j$ are the unique indices for which $s$ is contained in $R_i$ and $t$ is contained in $R_j$;
and there are no other edges in $F$. It turns out that $F$ is a rooted forest.
We have seen above that the $\Gamma$-free property implies that for each row $r$ of $M$, there is at most one edge in $G$ that ends in $r$.
This means that if $e$ is a regular edge in $G$ that starts from row $s$ and ends in row $t$, then $t$ is the smallest row index (lowest row)
in the set $R_j$ containing $t$, because there is no special edge ending in $t$. We also note that if $s$ is contained in the set $R_i$, then
$\min R_i \le s < t = \min R_j$.
So we can conclude that for each set $R_j$, there is at most
one regular edge in $G$ that ends in a row of $R_j$ (this row can only be the lowest row, and there is at most one edge that ends in that row),
implying that every vertex $(R_i,C_i)$ in $F$ has indegree at most $1$. The previous discussion also showed that if $\widetilde e$
is an edge from $(R_i,C_i)$ to $(R_j,C_j)$ in $F$, then $\min R_i<\min R_j$. From these we can see that $F$ is indeed a rooted forest (the edges are directed from parent to child),
and moreover, $F$ is an increasing forest on the set $\{(R_1,C_1),\dots,(R_m,C_m)\}$ equipped with the following total order $\prec$:
$$(R_i,C_i)\prec(R_j,C_j)\overset{\text{def}}{\iff}\min R_i<\min R_j.$$
(This order is total, because the $R_i$'s are pairwise disjoint, so their smallest elements are pairwise distinct.)
The bijection $\pi$ of Lemma~\ref{monoton_erdo} constructs a permutation $\pi(F)$ of the vertices $(R_1,C_1),\dots,(R_m,C_m)$.
Finally, we set $\phi(M):=\pi(F)$. We have already checked in the previous paragraph that $\phi(M)\in\mathcal S$, so the definition of $\phi$ is valid.
The construction is illustrated on Figure~\ref{fig2}, where the top-$1$'s and the special edges are colored red.
\par
\begin{figure}[h]%
\includegraphics[width=\hsize]{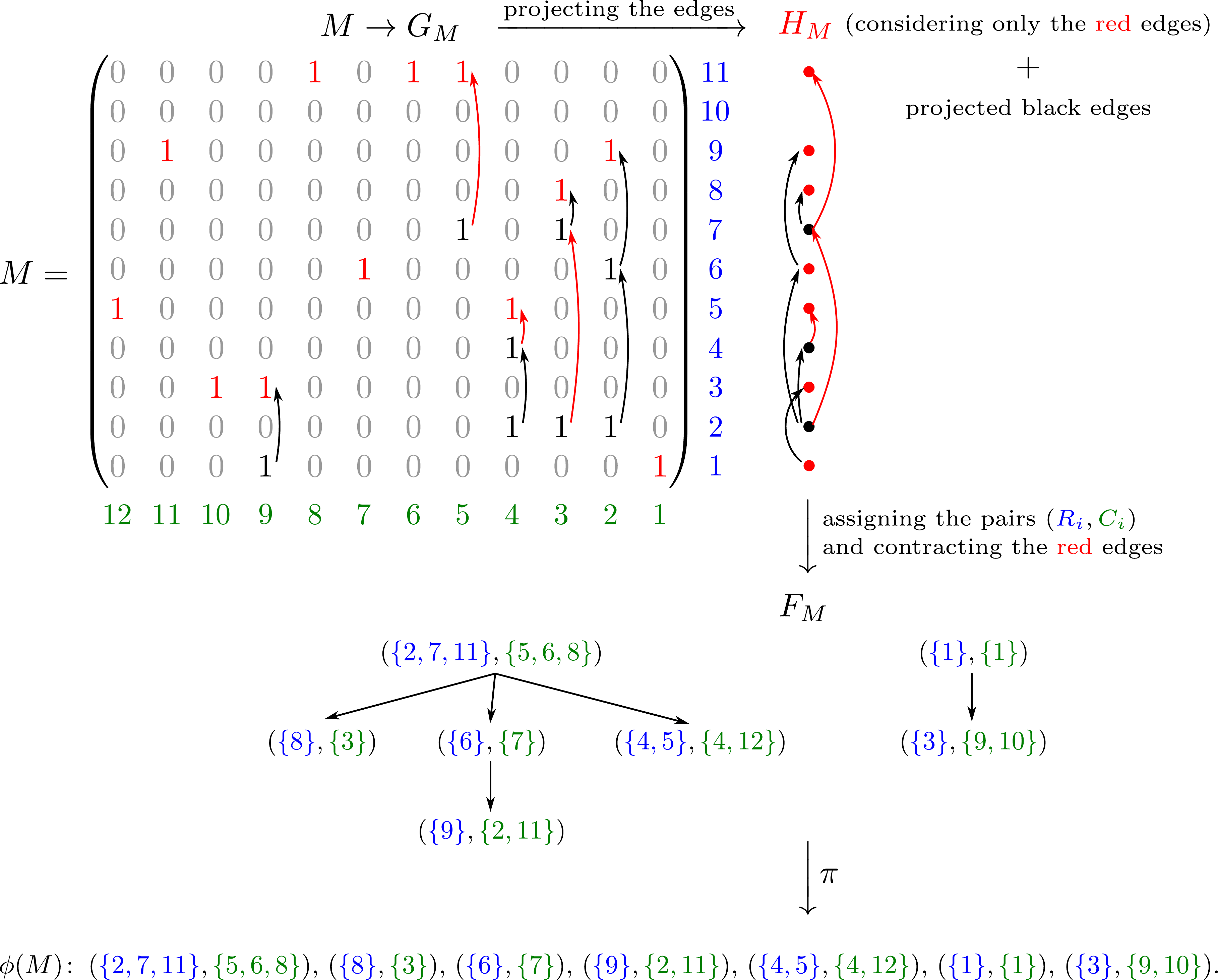}%
\centering
\caption{Illustration of $\phi$ (for $n=11$, $k=12$).}\label{fig2}%
\end{figure}%
Before proving the bijectivity of $\phi$, we summarize some properties of the construction:
\begin{enumerate}[label=(\roman*)]
\item\label{tul1} The rows in $\cup_{i=1}^m R_i$ are exactly those rows of $M$ that contain at least one $1$,
the columns in $\cup_{i=1}^m C_i$ are exactly those columns of $M$ that contain at least one $1$.
\item\label{tul2} In the row $\max R_i$ (from bottom), the top-$1$'s are exactly in the columns of $C_i$, for $i=1,\dots,m$. There are no top-$1$'s in other rows of $M$.
\item\label{tul3} The edges of $G_M$ are bijectively associated to the non-top $1$'s of $M$ (an edge corresponds to the non-top $1$ from which it starts).
\item\label{tul4} If there is an edge $e$ from row $s$ to row $t$ in $G_M$, then the start and end vertex of $e$ are in the column of the rightmost $1$ of the row $t$.
\item\label{tul5} The number of edges in $H_M$ is equal to the number of special edges in $G_M$.
(The ``projection'' $e\mapsto e'$ in the definition of $H_M$ is a bijection between the two edge sets.)
\item\label{tul6} The number of edges in $F_M$ is equal to the number of regular edges in $G_M$.
(The ``projection'' $e\mapsto \widetilde e$ in the definition of $F_M$ is a bijection between the two edge sets.)
\item\label{tul7} If $(R_j,C_j)$ is a child of $(R_i,C_i)$ in $F_M$, i.e.\ if there is an edge $\widetilde e$ from $(R_i,C_i)$ to $(R_j,C_j)$ in $F_M$,
then the regular edge $e$ of $G_M$ that corresponds to $\widetilde e$ ends in row $\min R_j$ and starts
in the highest row of $R_i$ that is lower than $\min R_j$. (Such a row exists, because $\min R_i < \min R_j$ by the increasing property of $F_M$.)
\end{enumerate}
The injectivity of the projection in \ref{tul6} was implicitly proved when we saw that every vertex in $F_M$ has indegree at most $1$.
The last statement of \ref{tul7} follows from the facts that $e$ cannot start from a higher row of $R_i$, because it is directed ``upwards'',
and $e$ cannot start from a lower row of $R_i$, because otherwise the regular edge $e$ would be longer than the special edge starting from the same row.
\par
Let $a=((R^*_1,C^*_1),\dots,(R^*_l,C^*_l))$ be an arbitrary $(n,k)$-Callan sequence.
We have to show that there is exactly one $\Gamma$-free $n\times k$ $0$-$1$ matrix $M^*$ for which $\phi(M^*)=a$.\par
So we reconstruct $M^*$ from $a$. The last step in constructing the image of a matrix was the application of the bijection $\pi$ from Lemma~\ref{monoton_erdo}.
This step is invertible, so we can view $a$ as $\pi^{-1}(a)$, an increasing forest on the set $\{(R^*_1,C^*_1),\dots,(R^*_l,C^*_l)\}$ with the total order $\prec$ defined above.
Set $F^*:=\pi^{-1}(a)$, and let $H^*$ be the directed graph that is the vertex-disjoint union of directed paths $P_i$
whose vertices are the elements of $R^*_i$ in increasing order along the path ($i=1,\dots,l$).
The point is that the top-$1$'s of $M^*$ are uniquely determined by property~\ref{tul2}; and we can uniquely determine the projection of the edges assigned to the non-top $1$'s,
using property~\ref{tul5} for special edges and using properties \ref{tul6}-\ref{tul7} for regular edges.
On a {\it projected edge\/} we mean a pair $(s,t)$ of rows, which can be thought as ``it starts from row $s$ and ends in row $t$'' ($s<t$). Further, we can figure out the original column of the projected edge $(s,t)$
once the row $t$ is completely filled, by property~\ref{tul4}. A row $r$ is completely filled if we know the positions of non-top $1$'s in that row (the top-$1$'s are already determined); and for that, we have to project back
the projected edges starting from $r$. For this reason, we may need first to fill the last rows of the children of $(R^*_i,C^*_i)$ in $F^*$, where $R^*_i\ni r$.
These lead to the following algorithm for constructing $M^*$:
\begin{itemize}
\item Start with an empty $n\times k$ matrix, then fill the rows of $\{1,\dots,n\}\setminus\cup_{i=1}^l R^*_i$ with $0$'s.
\item Order the vertices of $F^*$ in such a way that every non-root vertex precedes its parent (this can be done by post-order transversal, for example).
\item Process the vertices of $F^*$ in this order, and for vertex $(R^*_i,C^*_i)$, fill the rows of $R^*_i$ as follows ($i\in\{1,\dots,l\}$):
\begin{enumerate}[label=(\alph*)]
\item\label{step_a} In row $\max R^*_i$, place $1$'s into the columns of $C^*_i$.
\item\label{step_b} For each child $(R^*_j,C^*_j)$ of $(R^*_i,C^*_i)$: Let $c$ denote the column of the rightmost $1$ in row $\min R^*_j$ (that row is already filled),
let $r$ denote the highest row of $R^*_i$ that is lower than $\min R^*_j$ (cf.\ property~\ref{tul7}, and we note that such an $r$ exists by the increasing property of $F^*$), and place a $1$ into position $(r,c)$.
\item\label{step_c} Fill the empty positions in row $\max R^*_i$ with $0$'s.
\item\label{step_d} Let $r_h$ be the $h^{\text{th}}$ highest row (index) in $R^*_i$. For $h=2,3,\dots,|R^*_i|$, in row $r_h$ place a $1$ below the rightmost $1$ of row $r_{h-1}$,
and then fill the empty positions of $r_h$ with $0$'s.
\end{enumerate}
\end{itemize}\par
After the previous discussion, it should be clear that the obtained matrix $M^*$ is the unique candidate to be the inverse of $a$.
(We note that the outcome $M^*$ of the algorithm does not depend on the actual order chosen in second step. We place at least one $1$ to each row of $\cup_{i=1}^l R^*_i$,
so the ``rightmost 1'' always makes sense in the algorithm.) So we are left to show that $M^*$ is $\Gamma$-free and that $\phi(M^*)=a$.
In our task, the following observation will be helpful:
\begin{enumerate}[resume,label=(\roman*)]
\item\label{tul8} Let $(R^*_i,C^*_i)$ be an arbitrary vertex of $F^*$, and let $J\subseteq\{1,\dots,l\}$ denote the set of indices $j$ for which $(R^*_j,C^*_j)$ is a vertex of $F^*[(R^*_i,C^*_i)]$ (cf.\ Notation~\ref{jeloles}).
In $M^*$ every $1$ of a row of $R^*_i$ is contained in a column of $\cup_{j\in J}C^*_j$.
The rightmost $1$ of row $\min R^*_i$ is in the rightmost column of $\cup_{j\in J}C^*_j$.
\end{enumerate}
This can be proved by a routine induction on the height of $(R^*_i,C^*_i)$ in $F^*$.\par
It is crucial to see that in step~\ref{step_b} when we place a $1$ into position $(r,c)$ we project back the edge $(R^*_i,C^*_i)\to(R^*_j,C^*_j)$ of $F^*$ to a regular edge,
while in step~\ref{step_d} we project back the edges of $H^*$ to special edges.
In order to see that this is what really happens in the algorithm, we have to check that there are no other $1$'s between the positions $(r,c)$ and $(\min R^*_j,c)$
(the newly written $1$ and the ``rightmost $1$'') in the final $M^*$, otherwise the edge starting from $(r,c)$ would not end at $(\min R^*_j,c)$,
resulting $\phi(M^*)\ne a$. The analogous check for step~\ref{step_d} is also necessary.
So we take a closer look at the way the $1$'s are placed in steps \ref{step_a}-\ref{step_d}.
When step~\ref{step_a} is applied, the newly written $1$'s are the first $1$'s appearing in their column. This is because, by property~\ref{tul8},
only those rows can contain $1$ in a column of $C^*_i$ that belong to an ancestor of $(R^*_i,C^*_i)$, but the ancestors of $(R^*_i,C^*_i)$ have not
been considered yet. Then, applying property~\ref{tul8} to the children of $(R^*_i,C^*_i)$, we can see that the newly written $1$'s in step~\ref{step_b}
has pairwise distinct columns, and different from the columns of $C^*_i$, too. This also means that we place $|R^*_i|-1$ {\it new\/} $1$'s in step~\ref{step_d}.
For steps \ref{step_b} and \ref{step_d}, the key observation is the following:
\item{($*$)} {\it The ``rightmost $1$'' in the description of steps \ref{step_b} and \ref{step_d}
is the lowest $1$ in its column at the moment of placing the new $1$ below it, for all possible indices $i,j$.}\medbreak
\noindent Before proving this, we note that ($*$) and the observation made on step~\ref{step_a} imply that {\it the algorithm fills the $1$'s of each column from top to bottom\/}.
Now we prove ($*$) by induction on the progress of the algorithm.
Suppose, by contradiction, that ($*$) does not hold for step~\ref{step_b} for some $i,j$; and let $u$ denote the ``rightmost $1$'' in question, and let $v$ denote the ``newly written $1$''.
In other words, suppose that there is a $1$ below $u$, denoted by $w$, when we want to write $v$. We can assume that there are no other $1$'s between $u$ and $w$.
As seen before, the substeps of step~\ref{step_b} involve pairwise distinct columns $c$ for the children of this given $(R^*_i,C^*_i)$, and $c\notin C^*_i$,
so $v$ is the first $1$ in the column of $u$ that is written into a row of $R^*_i$. Since $u$ is in row $\min R^*_j$, $w$ is not in $R^*_j$ (and not in $R^*_i$, neither);
let $R^*_k$ be the set containing $w$ ($j\ne k\ne i$). By the induction hypothesis and an easy analysis of the algorithm,
we know that $w$ was placed below $u$ in a former step~\ref{step_b}, which means that
$(R^*_j,C^*_j)$ has two parents in $F^*$, $(R^*_i,C^*_i)$ and $(R^*_k,C^*_k)$, contradiction.
After processing the steps \ref{step_a}-\ref{step_c}, all the $1$'s written in a row of $R^*_i$
are the lowest ones in their column, as just seen. From this, it is very easy to conclude that ($*$) holds for step~\ref{step_d} as well.\par
We prove now that $M^*$ is $\Gamma$-free, i.e.\ $M^*\in\mathcal M$.
Suppose, by contradiction, that $M^*$ contains three $1$'s in a $\Gamma$-configuration,
and let $u,v$ and $w$ denote the upper-left, lower-left and the upper-right one, respectively: $\begin{smallmatrix}u&w\\v&\end{smallmatrix}$.
We can assume that $v$ is chosen so that there are no $1$'s between $u$ and $v$ in $M^*$.
We saw in the previous paragraph that the algorithm fills the $1$'s of each column from top to bottom in the way described in ($*$),
which means that $v$ was placed below $u$ in a step \ref{step_b} or \ref{step_d}. But this is a contradiction, because $u$ is not the rightmost $1$
of its row (and the whole row of $u$ has been already filled, when $v$ is placed).\par
Now we sketch why $\phi(M^*)=a$.
As we underlined earlier, for steps \ref{step_b} and \ref{step_d}, there are no other $1$'s between the ``rightmost $1$'' and the ``newly written $1$'' in the final $M^*$.
This is ensured by ($*$) at the moment after writing the new $1$, and it will not change later neither, because the algorithm fills the $1$'s from top to bottom per column.
From this it is easy to see that in step~\ref{step_b} we indeed project back the edges of $F^*$ to regular edges, more precisely,
we ensure that the regular edges (of $G_{M^*}$) starting from a row of $R^*_i$ become, in $F_{M^*}$, the edges of $F^*$ starting from $(R^*_i,C^*_i)$.
(We leave the reader to check that the newly placed $1$'s in step~\ref{step_b} are start vertices of {\it regular\/} edges in the final $M^*$.)
Similarly, in step~\ref{step_d} we indeed project back the edges of $H^*$ to special edges. (We leave the reader to check that the newly placed $1$'s in step~\ref{step_d} are start vertices of
{\it special\/} edges in the final $M^*$.)
Finally, we note that since the $1$'s are filled from top to bottom per column,
hence the $1$'s placed in step~\ref{step_a} will be top-$1$'s in the final $M^*$, too -- which is also required for $M^*$ being the inverse of $a$.
Some minor details were skipped, but it is easy to see now that $H_{M^*}=H^*$, the pairs assigned to $M^*$ are $\{(R^*_i,C^*_i)\}_{i=1}^l$, and $F_{M^*}=F^*$,
which lead to $\phi(M^*)=a$, as stated.
\end{proof}
Recall from Section~\ref{sec1} that non-ambiguous forests correspond to those $\Gamma$-free $0$-$1$ matrices that have no all-$0$ rows and columns.
So in order to count non-ambiguous forests, one needs to enumerate these $\Gamma$-free matrices. This was done in~\cite{BH2}, and it is also an easy corollary of our previous construction.
\begin{corollary}
The number of non-ambiguous forests with characteristic matrix of size $n\times k$ is
$$
\sum_{m=0}^{\min(n,k)}(m!)^2\stirling{n}{m}\stirling{k}{m}.
$$
\end{corollary}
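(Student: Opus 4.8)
The plan is to read off the answer directly from the bijection $\phi$ of Theorem~\ref{tetel1}, using the fact recalled just before the statement: non-ambiguous forests correspond exactly to the $\Gamma$-free $0$-$1$ matrices that have no all-$0$ row and no all-$0$ column. So the first step is to determine which $(n,k)$-Callan sequences arise, under $\phi$, from matrices of this special kind. Here property~\ref{tul1} does all the work: for $M\in\mathcal M$, if $\phi(M)$ corresponds (before applying $\pi$) to the pairs $(R_1,C_1),\dots,(R_m,C_m)$, then $\cup_{i=1}^m R_i$ is precisely the set of non-all-$0$ rows of $M$ and $\cup_{i=1}^m C_i$ is precisely the set of non-all-$0$ columns. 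Hence $M$ has no all-$0$ row and no all-$0$ column if and only if $\cup_{i=1}^m R_i=\{1,\dots,n\}$ and $\cup_{i=1}^m C_i=\{1,\dots,k\}$; in other words, $(R_i)_{i=1}^m$ is an ordered set partition of $\{1,\dots,n\}$ and $(C_i)_{i=1}^m$ is an ordered set partition of $\{1,\dots,k\}$, both with the same number of blocks $m$. Since $\phi$ is a bijection, the number of non-ambiguous forests with characteristic matrix of size $n\times k$ equals the number of such pairs of equal-length ordered set partitions.

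The second step is a routine count, parallel to the proof of Lemma~\ref{polyB} but now without a dummy element, since the underlying sets must be fully covered. For a fixed block count $m$, there are $m!\stirling{n}{m}$ ordered set partitions of $\{1,\dots,n\}$ into $m$ nonempty blocks, and independently $m!\stirling{k}{m}$ ordered set partitions of $\{1,\dots,k\}$ into $m$ nonempty blocks; and $m$ must satisfy $0\le m\le\min(n,k)$. Multiplying and summing over $m$ gives
$$
\sum_{m=0}^{\min(n,k)}\bigl(m!\stirling{n}{m}\bigr)\bigl(m!\stirling{k}{m}\bigr)=\sum_{m=0}^{\min(n,k)}(m!)^2\stirling{n}{m}\stirling{k}{m},
$$
which is exactly the claimed formula.

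I do not expect any real obstacle here; the whole proof is a one-line translation via property~\ref{tul1} followed by an elementary enumeration. The only point requiring a word of care is the degenerate case $m=0$, which contributes $1$ only when $n=k=0$ (the empty matrix), consistent with the convention that the empty sequence is an $(n,k)$-Callan sequence; this is automatically handled by the identities $\stirling{0}{0}=1$ and $\stirling{n}{0}=0$ for $n\ge 1$.
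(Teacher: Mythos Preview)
Your proof is correct and follows essentially the same approach as the paper's own proof: both use property~\ref{tul1} of $\phi$ to identify the image of the all-$0$-row/column-free matrices with those Callan sequences whose $R_i$'s partition $\{1,\dots,n\}$ and whose $C_i$'s partition $\{1,\dots,k\}$, and then count these. You spell out the enumeration and the $m=0$ edge case in slightly more detail than the paper, which simply asserts the count is ``clear''.
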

\begin{proof}
By the definition of non-ambiguous forests, we have to count the $\Gamma$-free $n\times k$ $0$-$1$ matrices without all-$0$ rows and columns. Let $\mathcal N$ denote the set of these matrices.
In the proof of Theorem~\ref{tetel1}, $\phi$ establishes a bijective correspondence between $\Gamma$-free $n\times k$ $0$-$1$ matrices and $(n,k)$-Callan sequences.
Property~\ref{tul1} of $\phi$ shows that the restriction of $\phi$ to $\mathcal N$ is a bijection between $\mathcal N$ and the set of those $(n,k)$-Callan sequences $(R_1,C_1),\dots,(R_m,C_m)$
for which $\{R_1,\dots,R_m\}$ is a partition of $\{1,\dots,n\}$ and 
$\{C_1,\dots,C_m\}$ is a partition of $\{1,\dots,k\}$. These sequences are clearly counted by the sum in the statement.
\end{proof}
\section{The generating function of $\bold\Gamma$-free matrices}\label{sec2.5}
We can use the bijection to derive easily the generating function of $\Gamma$-free matrices. We introduce some notations. Let $\mathcal{G}$ denote the set of $\Gamma$-free matrices.  For $m\in{\mathcal G}$ let $r(m)$ denote the number of rows, $c(m)$ the number of columns, $r_e(m)$ the number of empty rows, $c_e(m)$ the number of empty columns, and $r_t(m)$ denote the number of top rows, rows that contain top $1$'s. 
Let $M(x,y,a,b,t)$ be the generating function of $\Gamma$-free matrices defined as follows:
\[M(x,y,a,b,t)=\sum_{m\in \mathcal{G}}t^{r_t(m)}a^{r_e(m)}b^{c_e(m)}\frac{x^{r(m)}}{r(m)!}\frac{y^{c(m)}}{c(m)!}.\]
We have the next theorem.
\begin{corollary}
The generating function of $\Gamma$-free matrices is
\begin{align*}
M(x,y,a,b,t)=\frac{e^{ax}e^{by}}{1-t(e^x-1)(e^y-1)}.
\end{align*}
\end{corollary}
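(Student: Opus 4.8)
The plan is to exploit the bijection $\phi$ from Theorem~\ref{tetel1} and translate each matrix statistic into a statistic of the associated $(n,k)$-Callan sequence, then sum the resulting multivariate generating function over all Callan sequences. Concretely, for a $\Gamma$-free matrix $M$ with $\phi(M)=(R_1,C_1),\dots,(R_m,C_m)$, property~\ref{tul1} tells us that $\cup R_i$ is exactly the set of non-empty rows and $\cup C_i$ the set of non-empty columns, so $r_e(M)=n-\sum_i|R_i|$ and $c_e(M)=k-\sum_i|C_i|$; and the number of components $m$ of the sequence equals the number of top rows $r_t(M)$, as established in the construction. Thus the weight $t^{r_t(m)}a^{r_e(m)}b^{c_e(m)}\frac{x^n}{n!}\frac{y^k}{k!}$ is entirely determined by the combinatorial data $(m,\{R_i\},\{C_i\})$ together with $n$ and $k$.

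Next I would set up the exponential generating function on the Callan side. Fixing a length $m\ge 0$, an $(n,k)$-Callan sequence of that length is an ordered collection of $m$ pairs $(R_i,C_i)$ with the $R_i$ pairwise disjoint nonempty subsets of $\{1,\dots,n\}$ and the $C_i$ pairwise disjoint nonempty subsets of $\{1,\dots,k\}$; the leftover $n-\sum|R_i|$ row-indices and $k-\sum|C_i|$ column-indices are ``unused.'' In EGF terms (with $x$ marking rows, $y$ marking columns), a single nonempty block of rows contributes $e^x-1$ and a single nonempty block of columns contributes $e^y-1$, an unused row contributes the factor $a$ hence the blocks of unused rows assemble to $e^{ax}$, and similarly unused columns give $e^{by}$. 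Because the sequence is \emph{ordered}, the $m$ pairs contribute $\bigl((e^x-1)(e^y-1)\bigr)^m$ rather than its $m$-th divided power, and each such sequence carries the weight $t^m$. Summing over $m\ge 0$,
\[
M(x,y,a,b,t)=e^{ax}e^{by}\sum_{m\ge 0}t^m\bigl((e^x-1)(e^y-1)\bigr)^m=\frac{e^{ax}e^{by}}{1-t(e^x-1)(e^y-1)},
\]
which is the claimed formula.

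The routine-but-essential step is justifying the product/compositional structure of the EGF, i.e.\ that choosing the disjoint subsets $R_1,\dots,R_m$ of $\{1,\dots,n\}$ (with the rest unused) is exactly the labelled-structure product $(e^x-1)^m\cdot e^{ax}$ in the variable $x$, and likewise for the columns in $y$; this is a standard application of the product formula for exponential generating functions, using that the row-side and column-side choices are independent. I do not expect a genuine obstacle here — the only point requiring a little care is bookkeeping the two separate label sets $\{1,\dots,n\}$ and $\{1,\dots,k\}$ simultaneously, which is handled cleanly by treating $M$ as a bivariate EGF and noting that the weights $a,b,t$ factor through the block structure exactly as described. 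An alternative, if one prefers to avoid species language, is to extract the coefficient of $\frac{x^n}{n!}\frac{y^k}{k!}$ on the right-hand side directly: expanding the geometric series and each $(e^x-1)(e^y-1)$ factor reproduces $\sum_m t^m\,(\text{ordered set partitions-with-remainder count})$, which matches the weighted count of Callan sequences term by term, and hence of $\Gamma$-free matrices by Theorem~\ref{tetel1}.
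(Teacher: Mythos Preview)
Your proposal is correct and follows essentially the same approach as the paper: both use the bijection $\phi$ of Theorem~\ref{tetel1} to translate the matrix statistics $(r_t,r_e,c_e)$ into the length and leftover-set sizes of the corresponding Callan sequence via property~\ref{tul1}, and then read off the bivariate EGF from the sequence-of-pairs-of-nonempty-sets structure. The paper phrases the last step in the symbolic-method language of Flajolet--Sedgewick (SEQ, SET), whereas you spell out the EGF product directly, but the argument is the same.
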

\begin{proof}
We have seen that there is a bijection between $\Gamma$-free matrices of size $n\times k$ and $(n,k)$-Callan-sequences. Moreover, we have seen that $\cup_{i=1}^mR_i$ (resp.\ $\cup_{i=1}^mC_i$) corresponds to the set of non-empty rows (resp.\ columns), and each top row corresponds to a pair $(R_i,C_i)$.
We can construct an $(n,k)$-Callan sequence as a sequence (permutation) of pairs of non-empty sets from $\{1,\ldots, n\}$ and $\{1,\ldots,k\}$ with two further sets: the set of remaining elements from $\{1,\ldots,n\}$ that may be empty; and the set of remaining elements from $\{1,\ldots,k\}$ that may also be empty. We follow the standard technique of symbolic method and use the usual notations of \cite{Flajolet} for combinatorial constructions without explicitly defining these here. 
The construction for Callan sequences described above is symbolically formulated as follows:
\[(\mbox{SEQ}(\gamma\mbox{SET}_{>0}(\mathcal{X})\times \mbox{SET}_{>0}(\mathcal{Y})))\times(\mbox{SET}(\alpha \mathcal{X}))\times(\mbox{SET}(\beta \mathcal{Y})),\]
where $\mathcal{X}$ is the atomic class from $\{1,\ldots, n\}$,  $\mathcal{Y}$ is the atomic class from $\{1,\ldots,k\}$, $\alpha$ marks the empty rows, $\beta$ marks the empty columns, and $\gamma$ marks the top rows. Based on the correspondence: $\mathcal{X}\rightarrow x$, $\mathcal{Y}\rightarrow y$, $\alpha\rightarrow a$, $\beta\rightarrow b$, and $\gamma\rightarrow t$ the construction translates to the formula given in the theorem.
\end{proof}

\section{Proof of Theorem~\ref{tetel2}}\label{sec3}
This section is devoted to the proof of Theorem~\ref{tetel2}. Our construction is based on the fact that pairs of permutations of $\{1,\dots,n\}$ with no common rise
can be encoded with certain labeled rooted forests. We present this as a lemma, but we think it is interesting in its own right.\par
The set of permutations of $\{1,\dots,n\}$ is denoted by $S_n$; and for a permutation $\eta=(\eta_1,\dots,\eta_n)\in S_n$,
we denote by $P_{\eta}$ the set $\{(1,\eta_1),\dots,(n,\eta_n)\}$.\par
Recall Convention~\ref{konv} and Figure~\ref{fig0} where the roots are colored red: The children of an ``element $1$'' (vertex $u$) in a non-ambiguous forest are the 
lowest $1$ above $u$ (if such a $1$ exist) and the rightmost $1$ on the left side of $u$ (if such a $1$ exists).
We often mix the (characteristic) matrix terminology and the graph terminology for non-ambiguous forests as in the previous sentence, if it does not lead to confusion.
\par
Let $M=\chi_A$ be the characteristic matrix of a non-ambiguous forest $A$.
Analogously to the notion of top-$1$'s introduced in Section~\ref{sec2}, we say that an element $1$ in $M$ is a \mbox{\emph{leading-$1$}},
if it is the leftmost $1$ in its row.
Clearly, an element $1$ in $M$ has exactly one child in $A$ if and only if the element is either a top-$1$ but not a leading-$1$ or
it is a leading-$1$ but not a top-$1$.
So $A$ is complete if and only if there is no such element, i.e.\ the set of top-$1$'s coincides with the set of leading-$1$'s in $M$.
Since $M$ has no all-$0$ rows and columns, there is exactly one leading-$1$ in each row and there is exactly one top-$1$ in each column.
So the top-$1$'s can coincide with the leading-$1$'s only if $M$ is a square matrix.
Moreover, if $A$ is complete, then the leaves are exactly the top-$1$'s / leading-$1$'s,
because a leaf is always a top-$1$, and now each top-$1$ is a leaf because it is also a leading-$1$.
This means that in case of complete $A$,
every row and every column contains exactly one leaf (the leading-$1$ or top-$1$), and that the number of rows / columns
is equal to the number of leaves. We have just proved the following.
\begin{lemma}\label{complete}
The characteristic matrices of \emph{complete} non-ambiguous forests with $n$ leaves are
exactly those $\Gamma$-free $0$-$1$ matrices without all-$0$ rows and columns
that have size $n\times n$ and in which every top-$1$ is a leading-$1$.
In such a matrix $M$ the leaves are exactly the top-$1$'s,
and the set of positions of leaves / top-$1$'s is $P_{\eta}$ for some $\eta\in S_n$.
\end{lemma}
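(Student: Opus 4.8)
The plan is to extract everything from the explicit description of the parent--child relation in a non-ambiguous forest, supplemented by a one-line counting argument. First I would record the basic trichotomy on the number of children. By Convention~\ref{konv} and the description of the underlying binary forest recalled just above, the children of an element~$1$ (a vertex $u$) of $M=\chi_A$ are the lowest $1$ strictly above $u$ in its column, if any, together with the rightmost $1$ strictly to the left of $u$ in its row, if any. Hence $u$ has no children precisely when it is simultaneously a top-$1$ and a leading-$1$; it has two children precisely when it is neither; and it has exactly one child in the remaining case. In particular, the set of leaves of $A$ is exactly the set of elements of $M$ that are both top-$1$'s and leading-$1$'s.

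Next I would translate the completeness condition. By the trichotomy, $A$ is complete if and only if no element of $M$ has exactly one child, i.e.\ if and only if the set $T$ of top-$1$'s and the set $L$ of leading-$1$'s of $M$ coincide. Now I would bring in the hypothesis that $M$ has no all-$0$ row and no all-$0$ column: then every column contains exactly one top-$1$, so $|T|$ equals the number of columns, and every row contains exactly one leading-$1$, so $|L|$ equals the number of rows. If $T=L$, these numbers are equal, so $M$ is square, say $n\times n$; and since every top-$1$ is then a leading-$1$, every top-$1$ is a leaf, while conversely every leaf is a top-$1$ by the previous paragraph, so the number of leaves is $|T|=n$. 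This establishes the forward direction of the characterisation together with the identification of leaves with top-$1$'s.

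For the converse I would start from an arbitrary $\Gamma$-free $0$-$1$ matrix $M$ of size $n\times n$ without all-$0$ rows or columns in which every top-$1$ is a leading-$1$. Then $T\subseteq L$ by assumption, while $|T|=n=|L|$ by the counting just made, so $T=L$; hence no element of $M$ has exactly one child, and the non-ambiguous forest $A$ with $\chi_A=M$ is complete, with leaves exactly the $n$ top-$1$'s. Finally, to identify the positions of these leaves with a $P_\eta$: the top-$1$'s occupy at most one position per column (definition of top-$1$) and there are exactly $n$ of them, hence exactly one per column; and each of them, being a leading-$1$, occupies at most one position per row, hence again exactly one per row. Thus this set of positions is the graph of a bijection from the rows to the columns, i.e.\ it equals $P_\eta$ for a unique $\eta\in S_n$.

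There is no real obstacle here; the one step that deserves a moment's thought is the passage from ``every top-$1$ is a leading-$1$'' to the equality $T=L$, which does not follow from the inclusion alone but follows at once because in an $n\times n$ matrix without all-$0$ lines both $T$ and $L$ have exactly $n$ elements. Everything else is a direct unwinding of the definitions of top-$1$, leading-$1$, child, leaf and completeness; in fact the argument is already implicit in the discussion preceding the lemma, and the above merely organises it.
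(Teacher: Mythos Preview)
Your proof is correct and follows essentially the same line as the paper: the paper establishes exactly this argument in the paragraph immediately preceding the lemma (ending with ``We have just proved the following''), and you have faithfully reorganised it, including the key observation that the inclusion $T\subseteq L$ upgrades to equality via the counting $|T|=n=|L|$ in an $n\times n$ matrix with no all-$0$ lines. There is nothing to add.
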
\par
A pair $(\alpha,\beta)\in S_n\times S_n$ will always be identified with the sequence
$(a_1,b_1),\dots,(a_n,b_n)$, where $\alpha=(a_1,\dots,a_n)$ and $\beta=(b_1,\dots,b_n)$. If we want to emphasize this,
we write $(\alpha,\beta)^T$ for denoting the sequence.
In this setting, the pair $(\alpha,\beta)$ has {\it common rise\/} if there are two consecutive elements of $(\alpha,\beta)^T$ for which $(a_i,b_i)<(a_{i+1},b_{i+1})$,
where `$<$' stands for ``less in both coordinates''.
For an arbitrary $(\alpha,\beta)\in S_n\times S_n$, the set of elements of $(\alpha,\beta)^T$ is clearly equal to a set $P_{\eta}$ for some $\eta\in S_n$.\par
Now fix an $n$. We group the complete non-ambiguous forests with $n$ leaves by the set of their leaves (i.e.\ the set of positions $(i,j)$ of the leaves)
and group the members of $S_n\times S_n$ without common rise by the set of their elements -- considering the members of $S_n\times S_n$ as $n$-element sequences.
From the discussion above, each of these groups can be described by a permutation of $S_n$.
Hence, Theorem~\ref{tetel2} clearly follows from the following theorem:
\begin{theorem}\label{tetel3}
Fix an arbitrary $\eta\in S_n$. Let $\mathcal C$ be the set of those complete non-ambiguous forests (with $n$ leaves) whose set of leaves is $P_{\eta}$.
Let $\mathcal P$ be the set of those pairs $(\alpha,\beta)\in S_n\times S_n$ that have no common rise and for which the set of elements of $(\alpha,\beta)^T$ is $P_{\eta}$,
i.e.\ $\mathcal P$ is the set of permutations of $P_{\eta}$ with no common rise (in the sense above).
Then there exists a bijection between $\mathcal C$ and $\mathcal P$, thus $|\mathcal C|=|\mathcal P|$.
\end{theorem}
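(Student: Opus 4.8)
The plan is to realise $\mathcal C$ and $\mathcal P$ as two different encodings of a single family of labeled rooted forests and then chain the bijections; this is the ``labeled forest'' strategy announced before the statement, and I use Convention~\ref{konv} throughout. On the permutation side the basic observation is that $\mathcal P$ carries very little data: if $(\alpha,\beta)\in S_n\times S_n$ has no common rise and the element set of $(\alpha,\beta)^T$ is $P_\eta$, then $b_t=\eta_{a_t}$ for all $t$, hence $\beta=\eta\circ\alpha$, so $\mathcal P$ is in bijection with the set $\mathcal A_\eta$ of permutations $\alpha\in S_n$ such that $\alpha$ and $\eta\circ\alpha$ have no common ascent. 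The first real step is to prove the lemma: a pair of permutations with no common rise is encoded by a rooted forest carrying two compatible ``increasing'' structures — one read off from the first permutation, the complementary one stored in the vertex labels — the no-common-ascent condition being exactly what makes the encoding reversible. I would obtain it as a label-sensitive refinement of the traversal $\pi$ of Lemma~\ref{monoton_erdo}, scanning $\alpha$ from left to right and attaching each new vertex below a suitable earlier one, the direction of the falling coordinate dictating the choice. Specialising to a fixed $\eta$ produces a family $\mathcal L_\eta$ of labeled forests together with a bijection $\mathcal A_\eta\leftrightarrow\mathcal L_\eta$; it is convenient to describe $\mathcal L_\eta$, in the notation of Notation~\ref{jeloles}, as the set of forests $F$ on $\{1,\dots,n\}$ that are increasing for the natural order and satisfy $\eta_i>\min_{w\in F[j]}\eta_w$ along every edge $i\to j$ of $F$.

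Next I would encode $\mathcal C$ by the same family. By Lemma~\ref{complete} an element of $\mathcal C$ is an $n\times n$ $\Gamma$-free matrix with leaf set $P_\eta$, and I would use two structural properties of such a matrix. First, a dichotomy: every $1$ is either the lowest $1$ of its column or the rightmost $1$ of its row — otherwise a vertex would have a point both below it in its column and to its right in its row, contradicting non-ambiguity — so in the underlying binary forest each non-root vertex is a left child (in the first case) or an up child (in the second). Second, a rigidity property: the column of a vertex is the column of the leaf reached from it by following up-child edges, and its row is the row of the leaf reached by following left-child edges, so the positions of all internal vertices are determined by the leaf positions together with the abstract shape of the binary forest. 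It follows that contracting all left-child edges collapses each row to a single vertex and turns the surviving up-child edges into a rooted forest on the $n$ rows; labelling row $i$ by the column $\eta_i$ of the leaf it contains (a datum already fixed by $\eta$) makes it a labeled forest of the above kind, and one checks this forest coincides with the one obtained by restricting the bijection $\phi$ of Section~\ref{sec2} to $\mathcal C$. The decisive claim is that the forests arising this way are precisely the members of $\mathcal L_\eta$: an increasing labeled forest is realised by a genuine — in particular $\Gamma$-free — complete non-ambiguous forest with leaf set $P_\eta$ exactly when $\eta_i>\min_{w\in F[j]}\eta_w$ holds along every edge. The reconstruction of the matrix from the forest, and the verification that it is $\Gamma$-free, go through just as in the proof of Theorem~\ref{tetel1}; the only new point is to recognise when the reconstructed matrix is complete, which is exactly the displayed inequality.

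Composing $\mathcal C\leftrightarrow\mathcal L_\eta\leftrightarrow\mathcal A_\eta\leftrightarrow\mathcal P$ proves the theorem, and summing over $\eta\in S_n$ as explained before the statement yields Theorem~\ref{tetel2}. I expect the main obstacle to be the characterisation step shared by the two encodings: showing that the single condition $\eta_i>\min_{w\in F[j]}\eta_w$ carves out, on the forest side, exactly the image of $\mathcal C$ and exactly the image of $\mathcal P$ — equivalently, that ``$\Gamma$-free and complete with leaf set $P_\eta$'' on the matrix side and ``$\alpha$ and $\eta\circ\alpha$ have no common ascent'' on the permutation side are two faces of the same combinatorial constraint. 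A secondary, purely technical, nuisance will be keeping the bottom-to-top / right-to-left indexing of Convention~\ref{konv} straight while rows are collapsed. One should also note in passing that the obvious reading of $\phi$ as a map to permutation pairs does \emph{not} have image in $\mathcal P$, so the correspondence on the permutation side genuinely has to be label-sensitive rather than the plain traversal of Lemma~\ref{monoton_erdo}. Everything else — reversibility of the scan, invertibility of the contraction, and the rigidity property — should reduce to a routine induction on the number of vertices.
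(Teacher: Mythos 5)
Your matrix-side encoding is sound and is in fact the same as the paper's: restricting $\phi$ (equivalently $M\mapsto F_M=\pi^{-1}(\phi(M))$) to $\mathcal C$, i.e.\ collapsing rows and keeping the projected regular edges, lands bijectively on the forests on $P_\eta$ that are increasing in the first coordinate and satisfy your condition $\eta_i>\min_{w\in F[j]}\eta_w$ along every edge (the paper's ``properly labeled forests''). The gap is on the permutation side. You assert that a ``label-sensitive refinement'' of the traversal of Lemma~\ref{monoton_erdo} sends $\mathcal A_\eta$ (equivalently $\mathcal P$) bijectively onto this same class $\mathcal L_\eta$, but no such map is constructed, and this is precisely where all the work lies. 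What the natural encoding $\pi^{-1}$ actually gives for a no-common-rise pair is a different class of forests: increasing forests in which only the \emph{leftmost} child of each vertex is required to have smaller second coordinate than its parent (a local condition on one child, not a subtree-minimum condition on every child). These two classes genuinely differ. For example, with $\eta=(2,3,1)$ take the tree with root $(1,2)$ and children $(3,1)$ and $(2,3)$: its leftmost child $(3,1)$ has smaller second coordinate, and indeed $\pi$ sends it to the pair $\alpha=(1,3,2)$, $\beta=(2,1,3)$, which has no common rise; yet the child $(2,3)$ violates $\eta_1>\min_{w\in F[2]}\eta_w$, so this forest is not in $\mathcal L_\eta$. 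Hence no-common-rise pairs do not map onto $\mathcal L_\eta$ under the traversal, and the ``direction of the falling coordinate'' insertion you sketch is not specified precisely enough to see that it is well defined, let alone bijective onto a class cut out by a global subtree condition.

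Closing this gap is exactly the content of the paper's final lemma: it first identifies $\pi^{-1}(\mathcal P)$ with the ``leftmost-valid'' forests just described, and then builds a nontrivial recursive bijection $f$ between leftmost-valid trees and properly labeled trees by repeatedly locating the leftmost ``bad'' subtree (one whose vertices all have second coordinate larger than the root's) and merging it with the preceding subtrees; the inverse requires a pre-order search and a careful splitting into ``good siblings''. None of this reduces to ``a routine induction on the number of vertices'', and your proposal as written replaces it by an unsubstantiated existence claim. So the overall architecture $\mathcal C\leftrightarrow\mathcal L_\eta\leftrightarrow\mathcal P$ is the right one (and agrees with the paper), but the bridge $\mathcal L_\eta\leftrightarrow\mathcal P$ is missing; to complete the argument you must either construct your label-sensitive insertion explicitly and prove it hits exactly the subtree-minimum condition, or interpose an intermediate class (as the paper does with leftmost-valid forests) and supply the conversion bijection.
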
%
\begin{figure}[h]%
\includegraphics[scale=0.4]{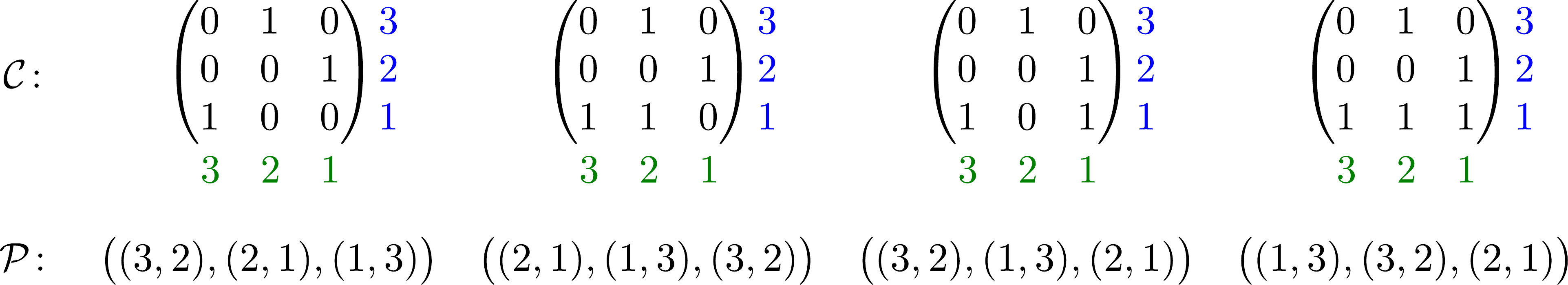}%
\centering
\caption{The sets $\mathcal C$ and $\mathcal P$ for $n=3$, $\eta=(3,1,2)$.}%
\end{figure}%
\begin{proof}
Let $\eta=(\eta_1,\dots,\eta_n)$. 
We think the elements of $\mathcal C$ as $n\times n$ $0$-$1$ matrices, as characterized in Lemma~\ref{complete}.
Recall the construction of the proof of Theorem~\ref{tetel1}.\par
As a first step, we apply the injective map $M\mapsto F_M=\pi^{-1}(\phi(M))$ to the matrices $M\in\mathcal C$, where $F_M$, $\pi$, and $\phi$ are defined in Section~\ref{sec2}.
This map establishes a bijective correspondence between the matrices of $\mathcal C$ and the increasing forests in the set $\widetilde{\mathcal C}:=\{F_M:M\in\mathcal C\}$.
Now we are going to give a simple description of $\widetilde{\mathcal C}$.\par
By Lemma~\ref{complete}, we know that in every matrix of $\mathcal C$, the set of top-$1$'s is $P_{\eta}$.
Let $\mathcal D$ denote the set of $\Gamma$-free $n\times n$ $0$-$1$ matrices in which the set of top-$1$'s is $P_{\eta}$.
Clearly, $\mathcal D\supseteq\mathcal C$, but not every matrix of $\mathcal D$ corresponds to a {\it complete\/} non-ambiguous forest.
Now we pick an arbitrary matrix $M\in\mathcal D$, and examine the increasing forest $F_M$.
We know that each row of $M$ is a top row: the row $i$ has exactly one top-$1$, which is in column $\eta_i$, by the definition of $\mathcal D$.
This means that the vertex set of $F_M$ is $P_{\eta}$. (More precisely, the vertices of $F_M$ have the form $(\{i\},\{\eta_i\})$, but we can leave the braces.)
As there are no special edges in $G_M$ (see Section~\ref{sec2}), every non-top $1$ of $M$ contributes to the edges of $F_M$.
In this special case, $F_M$ can be obtained from $M$ as follows: 
View $M$ as a non-ambiguous forest (with vertices and edges), project it horizontally, then label the vertex corresponding to the projected row $i$ with $(i,\eta_i)$, for $i=1,\dots,n$,
and orient the edges upwards. It should be clear from the proof of Theorem~\ref{tetel1} that the map $M\mapsto F_M$ is a bijection between $\mathcal D$
and the set of increasing forests on vertex set $P_{\eta}$. We denote the latter set by $\widetilde{\mathcal D}$. The total order $\prec$ on vertex set $P_{\eta}$ is simply the order by first coordinate,
i.e.\ now the condition ``increasing'' means that every child must have greater first coordinate than its parent has.\par
We know that $\widetilde{\mathcal C}\subseteq\widetilde{\mathcal D}$. An increasing forest $F^*\in\widetilde{\mathcal D}$ is in $\widetilde{\mathcal C}$ if and only if
the corresponding matrix $M^*\in\mathcal D$ (for which $F_{M^*}=F^*$) is in $\mathcal C$.
By Lemma~\ref{complete}, the matrix $M^*$ of $\mathcal D$ is in $\mathcal C$ if and only if every top-$1$ is a leading-$1$ in $M^*$, in other words, iff in every row of $M^*$ the non-top $1$'s are on the right side of the top-$1$ in that row.
The column index of a non-top $1$ can be read off from $F^*$.
In the proof of Theorem~\ref{tetel1}, step~\ref{step_b} of the inverse construction describes how the non-top $1$'s are placed to $M^*$ (step~\ref{step_d} does not place any $1$'s for $F^*\in\widetilde{\mathcal D}$):
For an arbitrary row $r$, the non-top $1$'s are associated to the children of $(r,\eta_r)$ in $F^*$. Namely, for each child $(s,\eta_s)$ of $(r,\eta_r)$, a $1$ is placed to row~$r$ into the column of the rightmost $1$ of row $s$,
and that column is the minimum of the second coordinates of the vertices in the subtree $F^*[(s,\eta_s)]$, applying the last statement of property~\ref{tul8} to the child $(s,\eta_s)$.
(Cf.\ Notation~\ref{jeloles} and Convention~\ref{konv}.) We conclude that the increasing forest $F^*\in\widetilde{\mathcal D}$ is in $\widetilde{\mathcal C}$
if and only if for every vertex $(a,b)$ of $F^*$ and for every child $(c,d)$ of $(a,b)$, there exists a vertex of $F^*[(c,d)]$ with second coordinate less than $b$.\par
We introduce a term for describing the forests of $\widetilde{\mathcal C}$. We say that $F$ is a {\it properly labeled forest\/} on vertex set $P_{\eta}$ (for some $\eta\in S_n$),
if $F$ is an (unordered) rooted forest on vertex set $P_{\eta}$, satisfying that whenever $(a,b)$ is the parent of $(c,d)$ in $F$, then
$a<c$ and there exists a vertex $(x,y)\in F[(c,d)]$ such that $b>y$. (See Figure~\ref{fig5} for an example.)
We can summarize the above investigations as $\widetilde{\mathcal C}$ is the set
of properly labeled forests on $P_{\eta}$. So it is enough to prove the following lemma.
\end{proof}
\begin{figure}[h]%
\includegraphics[width=\hsize]{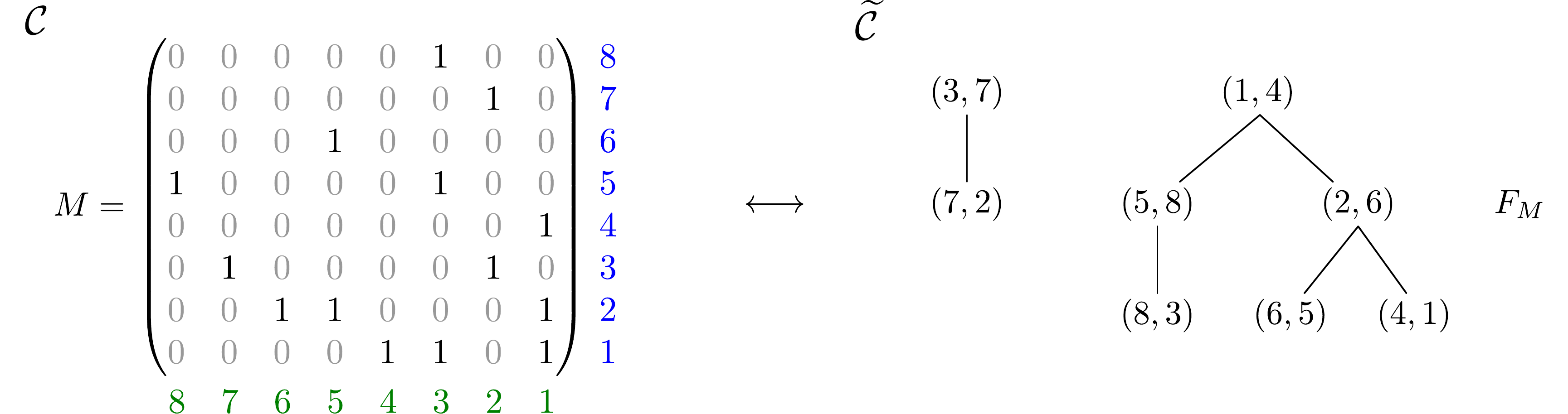}%
\centering
\caption{A complete non-ambiguous forest and the corresponding properly labeled forest.}\label{fig5}%
\end{figure}%
\begin{lemma}
For a given $\eta\in S_n$, let $\widetilde{\mathcal C}$ denote the properly labeled forests on vertex set $P_{\eta}$,
and let $\mathcal P$ denote the set of permutations of $P_{\eta}$ with no common rise.
Then there exists a bijection between $\widetilde{\mathcal C}$ and $\mathcal P$, thus $|\widetilde{\mathcal C}|=|\mathcal P|$.
\end{lemma}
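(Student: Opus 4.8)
The plan is to construct the bijection by a recursive traversal of the properly labeled forest, tuned so that a common rise can never occur at any step. Fix $F\in\widetilde{\mathcal C}$. For a rooted subtree $T$ of $F$ write $\mu(T)$ for the smallest second coordinate occurring among the vertices of $T$; the defining property of $\widetilde{\mathcal C}$ is precisely that $\mu(F[(c,d)])<b$ whenever $(c,d)$ is a child of $(a,b)$ in $F$. The observation that makes this property usable is that the vertex of $T$ realizing $\mu(T)$ is always a \emph{leaf}: if it had a child $(c,d)$, properly‑labeledness would demand $\mu(F[(c,d)])$ smaller than its second coordinate, i.e.\ smaller than $\mu(T)$, which is impossible. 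I would then produce a sequence $\Phi(F)$ as follows: list the components of $F$ in decreasing order of the first coordinate of their roots, and concatenate, for each component in turn, the word obtained by the recursive rule ``to traverse a rooted subtree $T$ with root $r$, emit $r$, then for each child subtree $S$ of $r$ taken in decreasing order of $\mu(S)$, \emph{enter $S$ at its minimum‑second‑coordinate leaf and climb back up to the root of $S$}'' --- where ``entering and climbing'' means: emit the minimum‑second‑coordinate leaf of $S$, then, following the path from that leaf up to the root of $S$, at each vertex $w$ of the path emit $w$ and then recursively traverse the child subtrees of $w$ that do not lie on the path, again ordered by decreasing $\mu$.

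The first task is to check $\Phi(F)\in\mathcal P$. Each consecutive pair in $\Phi(F)$ is a ``joint'' of one of two kinds. If it goes from a vertex to its parent (while climbing a path) or from the last vertex of one component's block to the root of the next component, the first coordinate strictly drops --- a parent, or a root listed later, has strictly smaller first coordinate --- so it is not a common rise. Every remaining joint goes from some already‑emitted vertex $z$ to the minimum‑second‑coordinate leaf $m$ of a subtree $S$ about to be entered; for these I would show that $\mu(S)$ is strictly below the second coordinate of $z$. Indeed, either $z$ is the parent in $F$ of the root of $S$, and then the inequality is exactly the properly‑labeled condition; or $z$ belongs to a subtree $S'$ emitted immediately before $S$ among the child subtrees of a common vertex, and then the second coordinate of $z$ is at least $\mu(S')>\mu(S)$ because $S',S$ were ordered by decreasing $\mu$. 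In either case the second coordinate strictly drops at the joint, so there is no common rise. (This is the point at which ordering siblings by decreasing $\mu$, rather than by first coordinate, is essential.)

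Finally I would show that $\Phi$ is a bijection onto $\mathcal P$ by realizing it as a composition. One verifies that $\Phi(F)$ is the pre‑order word --- reading children and components in decreasing order of first coordinate, as in Lemma~\ref{monoton_erdo} --- of an increasing forest $\widehat F$ on $(P_\eta,\prec)$, and that $\widehat F$ is obtained from $F$ by the explicit operation of repeatedly detaching each leaf that realizes the minimum second coordinate of its parent's subtree and reattaching it to its grandparent, stopping when it reaches a root. Since $G\mapsto\pi(G)$ is a bijection from increasing forests on $(P_\eta,\prec)$ to permutations of $P_\eta$ by Lemma~\ref{monoton_erdo}, it suffices to prove that $F\mapsto\widehat F$ is injective with image exactly $\{G:\pi(G)\text{ has no common rise}\}$. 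Injectivity would come from an explicit inverse: a leaf of $\widehat F$ produced by a lift is recognizable (it is a leaf whose second coordinate lies below its parent's but not below that of every later sibling), and pushing it back down into the appropriate later sibling subtree and iterating reconstructs $F$; the statement about the image follows by running the joint analysis of the previous paragraph in reverse.

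I expect the genuine obstacle to be this last step: pinning down a clean, order‑independent description of the lifting operation $F\mapsto\widehat F$ (one must check that lifting different leaves commutes, so that $\widehat F$ is well defined), together with its inverse, and then verifying that a leaf of an increasing forest is ``liftable back'' to a properly labeled forest precisely when the pre‑order of that forest avoids common rises. The absence of common rises in $\Phi(F)$ itself, by contrast, should be essentially forced once the traversal is set up with the two ordering conventions described above.
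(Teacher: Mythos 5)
Your forward map, once disambiguated, does land in $\mathcal P$: but note that as written the phrase ``recursively traverse the child subtrees of $w$ that do not lie on the path'' contradicts your own joint analysis, because if a side subtree is entered at its \emph{root} rather than at its minimum--second-coordinate leaf, common rises do occur. For example, for the properly labeled tree with root $(1,2)$, child $(2,4)$, children of $(2,4)$ equal to $(3,1)$ and $(4,5)$, and child $(5,3)$ of $(4,5)$, the root-first reading emits $(1,2),(3,1),(2,4),(4,5),\dots$, and $(2,4)\to(4,5)$ is a common rise; only the ``enter every subtree at its minimum leaf'' reading is consistent with your case analysis, and under that reading your argument that $\Phi(F)$ has no common rise is essentially correct (minor imprecision: the step onto the next path vertex may come from a vertex of a side subtree rather than from its child, but that vertex is still a proper descendant, so the first coordinate still drops).

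The genuine gap is the bijectivity of $\Phi$, which is the entire content of the lemma, and the route you sketch for it does not work. The claim that $\widehat F:=\pi^{-1}(\Phi(F))$ is obtained from $F$ by repeatedly detaching minimum leaves and reattaching them to grandparents is false: take $F$ to be the properly labeled path in which $(1,2)$ is the parent of $(2,3)$, which is the parent of $(3,4)$, which is the parent of $(4,1)$. Then $\Phi(F)=((1,2),(4,1),(3,4),(2,3))$ and $\widehat F$ is the star with root $(1,2)$ and children $(4,1),(3,4),(2,3)$; here the \emph{non-leaf} $(3,4)$ changes parent, which no sequence of leaf liftings can produce, and the forest your lifting rule actually yields (root $(1,2)$ with children $(4,1)$ and $(2,3)$, the latter keeping its child $(3,4)$) is not even leftmost-valid, since its pre-order word $((1,2),(4,1),(2,3),(3,4))$ contains the common rise $(2,3)\to(3,4)$. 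The lifting is moreover order-dependent, exactly the well-definedness problem you flag yourself (in the tree with root $(1,4)$, child $(2,5)$ having children $(3,1)$ and $(4,3)$, and $(5,2)$ a child of $(4,3)$, whether $(5,2)$ stops under $(2,5)$ or climbs to the root depends on whether $(3,1)$ has already been lifted, and only the former agrees with $\pi^{-1}(\Phi(F))$). So injectivity and surjectivity of $\Phi$ remain unproved, and the proposed intermediate description and leaf-recognition rule for the inverse would have to be replaced by something different before this plan could be completed. For comparison, the paper handles precisely this step by first identifying $\mathcal P$ via $\pi^{-1}$ with the \emph{leftmost-valid} forests (every leftmost child has smaller second coordinate than its parent) and then constructing a recursive ``merging'' bijection $f$ from leftmost-valid trees to properly labeled trees together with an explicitly described recursive inverse, proving bijectivity by induction on the vertex set; your traversal may well be invertible, but the inversion is exactly what still needs a correct formulation and proof.
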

\begin{proof}
We begin with some conventions. For $(a,b),(c,d)\in P_{\eta}$, let
\begin{align*}
(a,b)<_1(c,d)&\overset{\text{def}}{\iff}a<c;\\
(a,b)<_2(c,d)&\overset{\text{def}}{\iff}b<d.
%(a,b)<(c,d)&\overset{\text{def}}{\iff}a<c\ \text{and}\ b<d.
\end{align*}%
\def\wtc#1{\widetilde{\mathcal #1}}%
In this proof we will work with increasing forests on vertex set $(P_{\eta},<_1)$. We follow the conventions introduced at the beginning of Section~\ref{sec2}
(cf.\ Figure~\ref{fig1}):
We always list the children of a given parent of a forest $F$ in decreasing order from left to right with respect to the order $<_{1}$, and the phrase ``{\it leftmost/first\/} child''
refers to this list (i.e.\ it means the child of a given parent with biggest first coordinate). The tree components of $F$ are also listed in the decreasing order of their roots with respect to $<_{1}$.\par
As a first step, we apply $\pi^{-1}$ to the permutations of $(P_{\eta},<_1)$, where $\pi$ is the bijection introduced in the proof of Lemma~\ref{monoton_erdo}.
(So we consider only the first coordinates with the natural order when building the forest structure from a permutation, see also Figure~\ref{fig4}.)
By the lemma, $\pi^{-1}$ establishes a bijective correspondence between the permutations of $(P_{\eta},<_1)$ and the increasing forests on $(P_{\eta},<_1)$.
It is known, or an easy analysis of $\pi$ shows, that for any permutation (sequence) $S=(\bold{s}_1,\dots,\bold{s}_n)$ of $P_{\eta}$ and any $i\in\{1,\dots,n-1\}$,
the inequality $\bold s_i<_1\bold s_{i+1}$ holds if and only if $\bold s_{i+1}$ is the {\it leftmost\/} child of $\bold s_i$ in the increasing forest $\pi^{-1}(S)$.
This implies that $S$ has no common rise if and only if, for every vertex $u$ of $\pi^{-1}(S)$, the leftmost child of $u$ (if $u$ is a non-leaf)
has smaller second coordinate than $u$ has. So $\wtc P:=\{\pi^{-1}(S):S\in\mathcal P\}$
is the set of those (unordered) rooted forests $F$ on vertex set $P_{\eta}$ which satisfy the following conditions:
\begin{itemize}[noitemsep]
\item[(c1)] whenever vertex $v$ is a child of vertex $u$ in $F$, then $u<_1v$, i.e.\ $F$ is increasing with respect to $<_1$;
\item[(c2)] and whenever vertex $v$ is the {\it leftmost\/} child of vertex $u$ in $F$, then $v<_2u$.
\end{itemize}
We say that a forest $F$ is \emph{leftmost-valid}, if it satisfies conditions (c1)-(c2). So $\wtc P$ is the set of leftmost-valid forests on $P_{\eta}$.\par
In order to complete the proof, we give a bijection $\psi\colon\wtc P\to\wtc C$.
Both the notion of leftmost-valid forest and the notion of properly labeled forest can be extended for any vertex set $V\subseteq P_{\eta}$, without any modification.
First we define a bijective conversion function $f$ from the set of leftmost-valid \emph{trees} (one-component forests) to the set of properly labeled \emph{trees},
such that $f(T)$ has the same vertex set and root as $T$, for any leftmost-valid tree $T$.
Then we can define $\psi$. For an aribtrary $F\in\wtc P$, if $F$ has (the leftmost-valid tree) components $C_1,\dots,C_m$, then $\psi(F)$ is defined to be the vertex-disjoint union of the properly labeled trees $f(C_1),\dots,f(C_m)$.
As $f$ keeps the vertex set, $\psi(F)\in\wtc C$.\par
Now we give the (recursive) definition of $f$. We note that for any leftmost-valid (resp.\ properly labeled) tree $T$ on $V$,
$T[v]$ is clearly a leftmost-valid (resp.\ properly labeled) tree for any $v\in V$, cf.\ Notation~\ref{jeloles}.
It is recommended to follow the conversion of $C_2$ on Figure~\ref{fig4}.
For a leftmost-valid tree $T$ on vertex set $V\subseteq P_{\eta}$, we define $f(T)$ as follows.
\begin{itemize}[noitemsep]
\item If $T$ has one vertex, then $f(T):=T$.
\item Otherwise, let $r$ be the root of $T$, and let $v_1,\dots,v_k$ be the children of $r$ in $<_1$-decreasing order. Set $T_i:=f(T[v_i])$ for $i=1,\dots,k$,
and consider the \emph{sequence}
\begin{equation}\label{tree}
T_1,\,T_2,\,\dots,\,T_k.
\end{equation}
Find the smallest index $i\in\{1,\dots,k\}$, if such an $i$ exists, for which no vertex of $T_i$ has smaller second coordinate than $r$ has. (We say that $T_i$ is the leftmost \emph{bad} tree.)
We note that $i\ne1$, as it will be justified later. Then remove the elements $T_1,\dots,T_i$ from \eqref{tree}, and add a new first element $T'_1$,
where $T'_1$ is the rooted tree obtained from $T_1,\dots,T_i$ by joining the roots of $T_1,\dots,T_{i-1}$ (as new children) to the root of $T_i$ (as parent/root).
In this way we obtain a new sequence
\begin{equation}\label{tree2}
T'_1,\,T'_2,\,\dots,\,T'_{k-i+1};
\end{equation}
where $T'_{j}=T_{j+i-1}$ for $j\ge 2$. We call this process \emph{merging}.
Then do the same for \eqref{tree2}: find the leftmost bad tree, merge.
Then repeat this for the new sequence, and so on, stop when no such index $i$ (bad tree) was found. We note that the process terminates,
because the length of the sequence strictly decreases in each merging step ($i\ne1$).
We end up with a sequence $\widetilde T_1,\dots,\widetilde T_l$.
Finally, $f(T)$ is defined to be the tree with root $r$ that is obtained by joining the vertex $r$ (as new root) to the roots of $\widetilde T_1,\dots,\widetilde T_l$.
\end{itemize}
\begin{figure}[h]%
\includegraphics[width=\hsize]{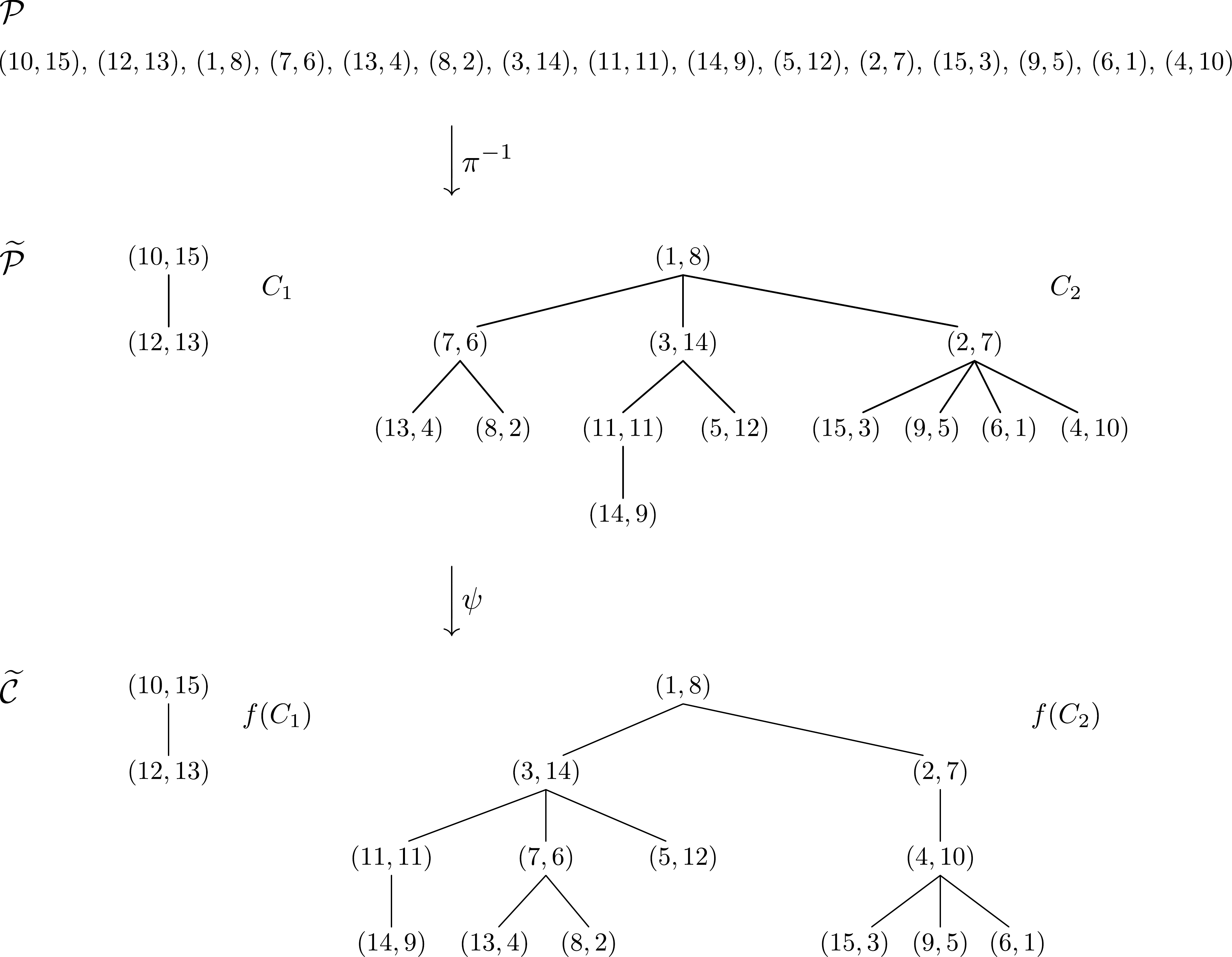}%
\centering
\caption{Illustration of
$\pi^{-1}|_{\mathcal P}$ and $\psi$ (for $n=15$).}\label{fig4}%
\end{figure}%
Now we justify why $i\ne1$, i.e.\ why the first tree in the actual tree sequence always has a vertex which is smaller than $r$ in the second coordinate.
This is true for the initial sequence \eqref{tree}, because $v_1$, the root of $T_1$, has smaller second coordinate than $r$ has, by the leftmost-validity
of $T$ and the fact that $f$ keeps the roots. And this vertex $v_1$ will keep staying in the first tree of the sequence after the mergings, too.\par
The fact that $f(T)$ has the same vertex set and root as $T$ can be verified by an easy induction.\par
Now we show why $f(T)$ is properly labeled. By induction, every tree of the initial sequence \eqref{tree} is properly labeled.
It is straightforward to see that this property is kept after each merging. We only have to check the new parent-child connections between the
root of the bad tree and its new children, the roots of the preceding trees. The monotonicity condition on the first coordinates is satisfied because the initial roots
$v_1,\dots,v_k$ are in $<_1$-decreasing order. The condition on the second coordinates is satisfied because 
if the root of the bad tree is $v_b$, then $r<_2 v_b$ (as $v_b$ is vertex of a bad tree), while in every preceding tree there exists a vertex $u$ with
$u<_2 r$ (as they are good trees), so a vertex $u$ with $u<_2 v_b$, as needed.
In the final stage we have the properly labeled good trees $\widetilde T_1,\dots,\widetilde T_l$, from which it is pretty obvious that $f(T)$ is properly labeled
(after checking the root $r$).\par
Now we sketch why $\psi$ is a bijection. It is clear that it is enough to show that for any fixed vertex set $V\subseteq P_{\eta}$,
the function $f$ (or more precisely, its restriction) is a bijection between the set of leftmost-valid trees on $V$ and the set of properly labeled trees on $V$.
This can be done by induction on the size of $V$.
Pick an arbitrary properly labeled tree $T^*$ on vertex set $V$, where $|V|\ge2$. Let $r$ be the root of $T^*$. The children of a given parent are listed in $<_1$-decreasing order
(the pre-order transversal follows this order in the next step).
\begin{itemize}[noitemsep]
\item[$\circ$] Find the first vertex $v_1$ in the pre-order transversal of $T^*$ (see the proof of Lemma~\ref{monoton_erdo}) for which $v_1<_2r$.
As $T^*$ is properly labeled, such a $v_1$ exists in $T^*[u]$, where $u$ is the leftmost child of $r$.
\item[$\circ$] We define a list (sequence) $\mathfrak L$ of subtrees. As an initial step, we define the first element of $\mathfrak L$ to be $T^*[v_1]$.
\item[$\circ$] Let $w_1,\dots,w_m$ be those siblings of $v_1$ in $T^*$, listed in $<_1$-decreasing order, for which $T^*[w_i]$ has a vertex with smaller second coordinate than the second coordinate of $r$.
We refer to these siblings as \emph{good} siblings. (The good siblings are on the right side of $v_1$, because $v_1$ was found by pre-order transversal.)
Add the trees $T^*[w_1],\dots,T^*[w_n]$ to the end of $\mathfrak L$ in this order. Then let $v_2$ be the parent of $v_1$ in $T^*$, and let $T^-_{v_2}$ denote the tree
obtained from $T^*[v_2]$ by deleting the subtrees $T^*[v_1]$, $T^*[w_1],\dots,T^*[w_n]$ from it. Add $T^-_{v_2}$ to the end of $\mathfrak L$.
Repeat this step for $v_2$ instead of $v_1$, and then for the parent $v_3$ of $v_2$, and so on, until we reach to the point when $v_i=u$.
At that point, all siblings of $v_i=u$ are good, due to the fact that $T^*$ is properly labeled. Add the subtrees $T^*[w]$ to $\mathfrak L$ for the siblings $w$ of $u$
from left to right as above, which finishes this step.
\item[$\circ$] We end up with a list $\mathfrak L$ of trees: $L_1,\dots,L_t$.
Finally, $\widehat T$ is defined to be the tree with root $r$ that is obtained by joining the vertex $r$ (as new root) to the roots of $f^{-1}(L_1),\dots,f^{-1}(L_t)$,
where the unique inverse images $f^{-1}(L_i)$ come from the induction hypothesis.
\end{itemize}
We claim that $\widehat T$ is the unique leftmost-valid tree on vertex set $V$ for which $f(\widehat T)=T^*$, proving the bijectivity of $f$.
The details are left to the reader.
\end{proof}
%\subsection*{Acknowledgement}
%TODO.

\end{document}